\documentclass[11pt,a4paper]{article}
\usepackage[utf8]{inputenc}
\usepackage{amsmath}
\usepackage{amsfonts}
\usepackage{amssymb}
\usepackage{graphicx}
\usepackage{color}
\usepackage[left=2cm,right=2cm,top=2cm,bottom=2cm]{geometry}
\usepackage{amsthm,dsfont}
\usepackage{mathtools}
\usepackage[makeroom]{cancel}
\usepackage{ulem}
\usepackage{url}
\usepackage{stackrel}

\usepackage{pgffor}
\usepackage{multicol,multirow}
\usepackage{algorithm}
\usepackage[noend]{algpseudocode}
\makeatletter
\def\BState{\State\hskip-\ALG@thistlm}

\usepackage[pdftex,
bookmarks,
bookmarksnumbered=true,
bookmarksopen=true,
colorlinks,
citecolor=green,
filecolor=black,
linkcolor=red,
urlcolor=green,
pdfauthor={López-Lopera, Andrés Felipe},
pdftoolbar=true,
pdfmenubar=true,
pdffitwindow = true
]{hyperref}


\usepackage{natbib} 
\bibliographystyle{abbrvnat}

\newtheorem{theorem}{Theorem}[section]
\newtheorem{definition}{Definition}[section]
\newtheorem{proposition}{Proposition}[section]
\newtheorem{remark}{Remark}[section]
\newtheorem{condition}{Condition}[section]
\newtheorem{lemma}{Lemma}[section]

\newcommand{\Hilb}{\mathcal{H}}
\newcommand{\HF}{\Hilb_F}
\newcommand{\HN}{\Hilb_N}
\newcommand{\HNF}{\Hilb_{N, F}}

\newcommand{\ENhat}{\widehat{E}_N}
\newcommand{\FNhat}{\widehat{F}_N}

\newcommand{\addvar}[1]{\, + \,}

\author{}
\date{}

\definecolor{green}{rgb}{0, 0.5, 0}

\title{Error Bounds for a Kernel-Based Constrained Optimal Smoothing Approximation}

\author{
    Laurence Grammont$^{1,\ast}$, Fran\c{c}ois Bachoc$^{2}$ and Andr\'es F. L\'opez-Lopera$^{3}$ \\\\
    \small 
    ${}^{1}$Univ. Jean Monnet, CNRS, ECL, INSA Lyon, UCB Lyon 1, ICJ UMR5208, F-42023 St-\'Etienne, France. \\
    \small ${}^{2}$Institut de Math\'ematiques de Toulouse, Univ. Paul Sabatier, F-31062 Toulouse, France. \\ 
    \small ${}^{3}$Univ. Polytechnique Hauts-de-France, CERAMATHS, F-59313 Valenciennes, France.\\
    \small ${}^{\ast}$Corresponding author
}

\begin{document}

\maketitle

\begin{abstract}
	This paper establishes error bounds for the convergence of a piecewise linear approximation of the constrained optimal smoothing problem posed in a reproducing kernel Hilbert space (RKHS). This problem can be reformulated as a Bayesian estimation problem involving a Gaussian process related to the kernel of the RKHS. Consequently, error bounds can be interpreted as a quantification of the maximum a posteriori (MAP) accuracy. To our knowledge, no error bounds have been proposed for this type of problem so far. The convergence results are provided as a function of the grid size, the regularity of the kernel, and the distance from the kernel interpolant of the approximation to the set of constraints. Inspired by the MaxMod algorithm from recent literature, which sequentially allocates knots for the piecewise linear approximation, we conduct our analysis for non-equispaced knots. These knots are even allowed to be non-dense, which impacts the definition of the optimal smoothing solution and our error bound quantifiers. Finally, we illustrate our theorems through several numerical experiments involving constraints such as boundedness and monotonicity.
\end{abstract}

\noindent {\bf Keywords:} Gaussian processes, inequality constraints, maximum a posteriori, reproducing kernel Hilbert space,
basis function approximation,
approximation error,
asymptotic convergence.  

\section{Introduction}
\label{sec:intro}
Consider a nonempty set $\Omega$ of $\mathbb{R}^d$ and a  set $E$ of   functions from $\Omega$ to $\mathbb{R}$. 
Given data $(x_i,y_i)_{i=1}^n \in \Omega \times \mathbb{R} $,  the   {\bf smoothing problem} is  to find a function $\widehat{u}\in {\cal H}$ solution of
\begin{equation}\label{smoothing}
	\min_{u\in {\cal H}  }
	\|u\|_{\cal H}^2+\displaystyle \frac{1}{\tau} \sum_{i=1}^{n}(u(x_i)-y_i)^2,
\end{equation} 
where ${\cal H}$ is the reproducing kernel Hilbert space (RKHS) defined by a kernel $K$ on $\Omega \times \Omega$, with $\tau >0$. Hence, ${\cal H}$ is a Hilbert space included in $E$, and we let $\| \cdot \|_{\cal H}$ be its Hilbert norm. 
Considering an RKHS allows the solution of \eqref{smoothing} to be interpreted as a Bayesian estimator, involving a Gaussian process (GP) $Y$ with covariance function $K$:  $K(x,x')=\operatorname{cov}(Y(x),Y(x'))$.
We refer for instance to \citep{stein1999interpolation,Rasmussen2005GP,karvonen2023asymptotic} for references on GPs.
In the Bayesian framework, $\tau > 0$ is the noise variance term.
In \citep{wahba1}, it has been proven that $\widehat{u}$ is the mean of the GP $Y$ conditionally to noisy observations:
\begin{equation*}\label{estimation}
	\widehat{u}(t)  = \mathds{E} [Y(t) | Y_1=y_1,\ldots,Y_n=y_n], 
\end{equation*}
where $\mathds{E}$ denotes the expectation of random variables. For the noisy case, the Bayesian model is $Y_i = Y(x_i) + {\cal E}_i$ for all $i = 1, \ldots, n$, where ${\cal E} = ({\cal E}_i)_i \sim {\cal N}(0,\tau I)$ is an independent centered Gaussian vector. 
Here, ${I}$ is the $n \times n$ identity matrix. 
The solution $\widehat{u}$ is then given by
$$
\widehat{u}(t) = {k}_n^{\top}(t) \left({K}_n+\tau {I}\right)^{-1} {y},
$$
where ${k}_n(t) = [K(t, x_1), \ldots, K(t,x_n) ]^\top$, ${K}_n = \left(K\left(x_i,x_j\right)\right)_{1\leq i,j\leq n}$ and ${y} = [y_1, \ldots, y_n]^\top$.

If a constraint is added to \eqref{smoothing},  given by a closed convex set $C$ of functions, we obtain  the {\bf constrained smoothing problem} of   finding  a function $\widehat{u}$, in ${\cal H}\cap C$, solution of 
\begin{equation}\label{constrained} 
	\quad \min_{u\in {\cal H}\cap C }
	~
	\|u\|_{\cal H}^2+\displaystyle \frac{1}{\tau} \sum_{i=1}^{n}(u(x_i)-y_i)^2.
\end{equation} 
This problem can be rewritten as  a constrained GP model so that the solution can be interpreted as a Bayesian estimation \citep{GMB}. Two important examples in practice are when $C$ is composed of bounded or componentwise monotonic functions. We refer to~\citep{bellec2018sharp,durot2002sharp,Cousin2016KrigingFinancial,durot2018limit,groeneboom2014nonparametric,Golchi2015MonotoneEmulation,groeneboom2001estimation,hornung1978monotone,lin2014bayesian,LopezLopera2019lineqGPNoise,LopezLopera2017FiniteGPlinear,LopezLopera2019GPCox,maatouk2017gaussian,Riihimaki2010GPwithMonotonicity,Zhou2019ProtonConstrGPs} for consideration of these constraints with GPs and more generally in statistics.

Unlike the unconstrained smoothing problem in~\eqref{smoothing}, there is no explicit expression for the solution in the constrained case, thus a numerical approximation of $\widehat{u}$ is required. For clarity, we restrict our study to the one-dimensional setting $\Omega = [0,1]$. Nevertheless, as explained in~Remark~\ref{rem:multiD}, the techniques we develop can be extended to the general $d$-dimensional case, albeit with more cumbersome notations.

A fruitful approach for numerical approximation is to consider piecewise linear finite-dimensional kernels, RKHSs and GPs~\citep{Bachoc2010cMLE,Cousin2016KrigingFinancial,GMB,LopezLopera2017FiniteGPlinear,maatouk2017gaussian,Zhou2019ProtonConstrGPs}. In particular, we consider the approximate solution $\widehat{u}_N$ in the RKHS defined by $K_N$, the covariance function of a finite-dimensional GP $Y_N$ approximating the GP $Y$. Here, $N \in \mathbb{N}$ is the number of knots defining the piecewise linear approximation. The function $\widehat{u}_N$ is then the solution to a \textbf{constrained discretized smoothing problem}, and also the maximum a posteriori (MAP) of the posterior distribution of the constrained finite-dimensional GP $Y_N$. In~\citep{GMB}, it is shown that $\widehat{u}_N$ converges to $\widehat{u}$ as $N \to \infty$ for fixed data $(x_i,y_i)_{i=1}^n$, however, no error bounds are provided for this convergence. More generally, to our knowledge, no general error bounds have been provided for numerical approximations of the constrained optimal smoothing problem or for the equivalent formulation with the MAP. This is the ambitious aim of this paper.

An error estimation is always highly dependent on regularity, which is related to the function space in which the exact solution is sought. 
In this paper, this function space is determined by the kernel $K$ that we assume to be $\beta$-H\"older, $0 < \beta \leq 1$.
We note that H\"older-continuity is a very standard regularity measure in statistics and machine learning for functions that are not necessarily differentiable~(see, e.g., \citep{pmlr-v75-locatelli18a}). In addition, our error bound construction would not benefit from a stronger regularity that Lipschitzness ($1$-H\"older continuity), because of piecewise linearity (see Remark~\ref{rem:regularity}).

To provide as much generality as possible, we allow for non-equispaced knots defining the finite-dimensional approximation, and we even allow the sequence of knots not to be dense in the input space. Non-equispaced knots enable higher accuracy for a given computational budget $N$~\citep{bachoc2022sequential,LopezLopera2022ACGP} and can be selected automatically by the MaxMod algorithm introduced in~\citep{bachoc2022sequential}. Furthermore, the convergence proof of MaxMod includes an intermediary step analyzing convergences for non-dense knots. This justifies our consideration of non-dense knots when providing error bounds. To account for non-dense and non-equispaced knots, we measure the asymptotic density of the $N$ knots by the specific grid size $\delta_N$ defined in~\eqref{delta_N}. Furthermore, for non-dense knots, the limit function to $\widehat{u}_N$ is not $\widehat{u}$ as in~\eqref{constrained}, but the function $\widehat{u}_F$ defined in~\eqref{eq:constrainedSpline}, which depends on the closure set $F$ of the sequence of knots. This definition relies on the notion of multiaffine extension introduced in~\citep{bachoc2022sequential}. Naturally, when $F = [0,1]$, we have $\widehat{u}_F = \widehat{u}$.

Ultimately, the decay rate of our error bound depends on $\beta$ (i.e. the regularity of the kernel), the grid size $\delta_N$, and a third factor we have highlighted: the distance of the kernel interpolant of the approximate solution to the set of constraints $C_F$ (defined in \eqref{eq:CF}). In the following, this third factor is denoted as $\alpha_N$ (see definition in~\eqref{alphaN}). We demonstrate that $\alpha_N$ tends to zero (see Remark~\ref{rem:alphaN}), though we leave open the problem of quantifying its decay rate in the general setting.

Our final error bounds are provided in Theorems~\ref{label:UB:no:extra:assumption} and \ref{th4}. In Theorem~\ref{label:UB:no:extra:assumption}, we consider the case where $\alpha_N = 0$, meaning the kernel interpolant of the approximate solution satisfies the constraints $C_F$. Here, the error bound is of order $\mathcal{O}(\delta_N^{\beta/4})$. In Theorem~\ref{th4}, we address the most general case, where this kernel interpolant does not necessarily meet the constraints. In this scenario, the error bound also depends on $\alpha_N$.

In numerical experiments, we illustrate and validate our theoretical results through various synthetic examples that account for different types of inequality constraints (e.g., boundedness and monotonicity) and regularity assumptions (i.e. smoothness of the kernel). We examine both scenarios where the knots are dense and where they are not. Numerically, we confirm the convergence as $N \to \infty$, observing a faster convergence for larger regularity $\beta$.

The paper is organized as follows. Section~\ref{sec:cosp} presents the constrained optimal smoothing problem, introducing $\widehat{u}_F$, our regularity indicators and the multiaffine extension, and stating various of their properties. Section~\ref{section:discrete:constrained:optimal:smoothing} discusses the numerical approximation of constrained optimal smoothing, denoted as $\widehat{u}_{N}$, and states its existence and unicity. Section~\ref{section:quantitative:properties} focuses on the quantitative properties of the set of approximants involved in $\widehat{u}_{N}$, which are essential for the final bounds.
Sections~\ref{sec:errorBounds} and~\ref{sec:numexp} provide the error bounds (with Theorems~\ref{label:UB:no:extra:assumption} and \ref{th4}) and the numerical experiments, respectively. 
Section~\ref{conclude} concludes the paper.

Several proofs are included in the main text to elucidate the construction of the error bounds. Additional proofs, primarily technical or containing pre-existing concepts for completeness, are in Appendix~\ref{sectionproofs}.

\section{Framework for the constrained optimal smoothing problem}
\label{sec:cosp}

In this section, we present the tools needed to pose the constrained optimal smoothing problem properly. Then we prove, with classical arguments, that the problem has a unique solution with a regularity that we outline. 

To ensure that our approach is easiest to understand, we have chosen to focus on the one-dimensional case. 
Remark~\ref{rem:multiD}
discusses the extension to the multi-dimensional case. Hence, here, $E$ is the set of real valued continuous functions on $\Omega=[0,1]$.
Let $F$ be a compact subset of $[0,1]$ containing $0$ and $1$. $F$ will be the closure of the set of knots required in the discretization process of the approximation (see Section~\ref{section:discrete:constrained:optimal:smoothing}). As discussed in Section~\ref{sec:intro}, allowing for a general $F$ that does not coincide with the entire set $\Omega$ is an intermediary step in the convergence proof of MaxMod in \citep{bachoc2022sequential}.
This justifies our interest here to allow for a general $F$.

We define  $E_F$ as the set of real-valued continuous functions restricted to $F$, endowed with the supremum norm:
$$
\| u \|_{\infty}=\max_{t\in F} | u(t) |,
$$
for $u\in E_F$.  
Let $\HF$ and ${\cal H}$ be the RKHS, both given by the kernel $K$ on $F$ and on $[0,1]$.
The Hilbert scalar product and norm for ${\cal H}$ are written as $\langle \cdot , \cdot \rangle_{\cal H} $ and $\| \cdot \|_{\cal H} $. Similarly, for $\HF$ they are written as $\langle \cdot , \cdot \rangle_{\HF} $ and $\| \cdot \|_{\HF} $.
Let us notice that $\HF= {\cal H}_{|F}$, where 
\[
{\cal H}_{|F}
=
\{
g : F \to \mathbb{R} ; 
\exists  f \in {\cal H}  ~ \mbox{s.t.} ~
\forall x \in F, g(x) = f(x)
\},
\]
see (\citep{berlinet2011reproducing}, Theorem 6). Both ${\cal H}$ and $\HF$ rely on the reproducing kernel $K$, which is always symmetric and positive semi-definite.   

The regularity of $K$ has a major influence on the error bounds. Here, regularity is measured by H\"older-continuity (recall the corresponding discussion in Section~\ref{sec:intro} and Remark~\ref{rem:regularity}). 
Let $\beta$ be a real number such that $0 < \beta \leq 1$.

\begin{definition}
	A function $f$ is $\beta$-H\"older continuous on $[0,1]$ if there exists a constant $c_f > 0$ such that, $\forall s,t \in [0,1]$, 
	$$
	| f(s)-f(t) | \leq c_f | s-t |^{\beta}.
	$$
\end{definition}

\begin{condition}
	\label{cond:contKernel}
	The reproducing kernel $K$ is  $\beta$-H\"older continuous with respect to both its inputs. That is, there exists a constant $c_K > 0$ such that 
	\begin{equation*}\label{Kbetaholder}
		\forall u,s,t \in [0,1], \quad      | K(u,s)-K(u,t)| \leq c_K | s-t |^{\beta}.
	\end{equation*}
\end{condition}

In the following, we will define quantities which are capable of reflecting the regularity of the problem and which will be useful in setting the error bounds. Let us define the modulus of continuity of a function defined and continuous on a compact subset $S$ of $[0,1]$:
\begin{align} \label{eq:Mf:delta}
	M_f(\delta) =
	\left\{
	\begin{array}{cl}
		\displaystyle \sup_{s, t \in S, \ | s-t|\leq \delta}   | f(s)-f(t)|  & \text{if } \delta \leq 1, \\
		M_f(1) &\text{if } \delta \geq 1.
	\end{array}
	\right.
\end{align}
Let us define the following quantity intended to be an indicator of regularity:      
\begin{equation}\label{eq:psif}
	\Psi_f(\delta) = \displaystyle\sup_{t\geq 1}    \dfrac{M_f(t\delta)}{t}.
\end{equation} 
The indicator $\Psi_f$ is adapted to the hat functions considered in Section~\ref{section:discrete:constrained:optimal:smoothing} for the numerical approximation, and is then involved in the proofs of our main error bounds. 
The following proposition will clarify its order of magnitude according to the regularity of $f$. 

\begin{proposition}\label{prop:psif}
	Let $S$ be a compact subset of $[0,1]$.
	If $f$ is continuous on $S$, 
	\begin{displaymath}\label{psiE} 
		\displaystyle \lim_{\delta \to 0} \Psi_f(\delta)=0.
	\end{displaymath}
	If $f$ is $\beta$-H\"older continuous on $S$,  
	\begin{displaymath}\label{psibeta} 
		\Psi_f(\delta) \leq c_f \delta^{\beta}.
	\end{displaymath}
\end{proposition}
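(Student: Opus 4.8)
The plan is to handle the two claims separately, starting with the easier Hölder case and then deriving the limit statement for general continuous $f$.

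First I would treat the $\beta$-Hölder case. If $f$ is $\beta$-Hölder on $S$ with constant $c_f$, then for $\delta \leq 1$ we have $M_f(\delta) \leq c_f \delta^\beta$ directly from the definition of $M_f$ in \eqref{eq:Mf:delta} and of Hölder continuity. For a general argument $t\delta$ with $t \geq 1$: if $t\delta \leq 1$ then $M_f(t\delta) \leq c_f (t\delta)^\beta$, so $M_f(t\delta)/t \leq c_f t^{\beta-1}\delta^\beta \leq c_f \delta^\beta$ since $\beta - 1 \leq 0$ and $t \geq 1$. If $t\delta > 1$ then $M_f(t\delta) = M_f(1) \leq c_f$, and $M_f(t\delta)/t \leq c_f/t \leq c_f \leq c_f \delta^\beta \cdot \delta^{-\beta}$ — here I need to be slightly careful, but since $t\delta > 1$ means $t > 1/\delta$, we get $M_f(t\delta)/t < c_f \delta \leq c_f \delta^\beta$ (using $\delta \leq 1$, so $\delta \leq \delta^\beta$). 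Taking the supremum over $t \geq 1$ yields $\Psi_f(\delta) \leq c_f \delta^\beta$ for $\delta \leq 1$; for $\delta \geq 1$ the bound $\delta^\beta \geq 1$ makes the inequality vacuous or one argues via $M_f(1)$. This part is routine.

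Next, for the limit statement with $f$ merely continuous: since $S$ is compact, $f$ is uniformly continuous and bounded, so $M_f$ is a bounded, nondecreasing function with $\lim_{\eta \to 0} M_f(\eta) = 0$. The subtlety is that $\Psi_f(\delta) = \sup_{t \geq 1} M_f(t\delta)/t$ involves arbitrarily large arguments $t\delta$, where $M_f(t\delta)$ is bounded by $M_f(1) =: B$ but does not decay; the decay must come from dividing by $t$. I would split the supremum at a threshold $t^\ast$: for $1 \leq t \leq t^\ast$, $M_f(t\delta) \leq M_f(t^\ast \delta)$, which tends to $0$ as $\delta \to 0$ with $t^\ast$ fixed; for $t > t^\ast$, $M_f(t\delta)/t \leq B/t < B/t^\ast$. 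Thus $\Psi_f(\delta) \leq M_f(t^\ast\delta) + B/t^\ast$; given $\varepsilon > 0$, first choose $t^\ast$ with $B/t^\ast < \varepsilon/2$, then choose $\delta$ small enough that $M_f(t^\ast\delta) < \varepsilon/2$. This gives $\limsup_{\delta \to 0}\Psi_f(\delta) \leq \varepsilon$ for every $\varepsilon > 0$, hence the limit is $0$.

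The main obstacle, though minor, is precisely this interplay in the continuous case: one cannot bound $M_f(t\delta)$ uniformly in $t$ by something small, so the argument must exploit that the problematic large-$t$ regime is controlled by the $1/t$ factor rather than by smallness of the modulus of continuity — the two-regime split handles this cleanly. I would also make sure the edge conventions in \eqref{eq:Mf:delta} (the clamping of $M_f$ at $\delta = 1$) are consistently used so that $\Psi_f(\delta)$ is well-defined and the supremum is actually finite (it is, being bounded by $B$).
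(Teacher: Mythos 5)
Your proof is correct. The H\"older part is essentially identical to the paper's computation: the same case split on $t\delta\leq 1$ versus $t\delta>1$, with the same use of $t^{\beta-1}\leq 1$ in the first regime and $t>1/\delta$ together with $\delta\leq\delta^{\beta}$ in the second. For the continuous case the underlying dichotomy is the same (moderate $t$, where the modulus of continuity is small, versus large $t$, where the factor $1/t$ does the work), but the packaging differs: the paper argues by contradiction, extracting from a putative bad sequence $(\delta_n,t_n)$ either a bounded subsequence of $(t_n)$ or a subsequence with $t_n\to\infty$ and deriving a contradiction in each case, whereas you give a direct bound $\Psi_f(\delta)\leq M_f(t^{\ast}\delta)+M_f(1)/t^{\ast}$ and optimize over the threshold $t^{\ast}$. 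Your version is slightly more informative, since it yields an explicit quantitative estimate on $\Psi_f(\delta)$ in terms of the modulus of continuity rather than only the limit statement; both arguments rely on the same two facts (uniform continuity of $f$ on the compact set $S$ and boundedness of $M_f$ by $M_f(1)$), so there is no gap either way.
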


\begin{proof} 
	The proof is postponed to Appendix~\ref{sectionproofs}.
\end{proof}

Then, we define the multi-affine extension announced in Section~\ref{sec:intro}. This extension will allow us to define the constrained optimal smoothing problem for functions defined on $F$ rather than on $[0,1]$. Then, this extension will allow us to define the limit function $\widehat{u}_F$ discussed in Section~\ref{sec:intro}, to which this paper provides upper bounds.
\begin{definition} \label{def:multi-affine:extension}
	For  $t \in [0,1]$,
	\begin{itemize}
		\item If $t \in F$, then define $t^- = t^+ = t$ and $w_-(t) = w_+(t) = 1/2$.
		\item If $t \not \in F$, then define $t^- =  \sup \{ x, x \in F, x \leq t\}$,  $t^+ =  \inf \{ x, x \in F, x \geq t\}$, and 
		\begin{align*}
			w_-(t) = (t^+-t)/(t^+ - t^-), \qquad
			w_+(t) = (t-t^-)/(t^+ - t^-).
		\end{align*}
	\end{itemize}
	Define the operator $P: E_F \to E$ as follows. For all $u \in E_F$, 
	\begin{equation*}
		P(u)(t) = u(t^-)w_-(t) +  u(t^+)w_+(t),
	\end{equation*}
	and call $P(u) \in E$ the multi-affine extension of $u$.
\end{definition}
We remark that, in Definition \ref{def:multi-affine:extension}, the multi-affine extension could also be called the affine extension. Nevertheless,  it is extended to the general multi-dimensional case in \citep{bachoc2022sequential}, where the name multi-affine extension is appropriate. Hence, for clarity, we will refer to $P(u)$ in Definition \ref{def:multi-affine:extension} as a multi-affine extension also in the one-dimensional exposition of this paper. 

\begin{proposition}  
	$P(u)$ is the unique function in $E$  equal to  $u$ on $F$  and affine  on the  intervals of $[0,1] \backslash F$.  Moreover the map $u \to P(u)$ is linear and 1-Lipschitz from $E_F$ to $E$ equipped with the supremum norm. In particular, it preserves uniform convergence.
\end{proposition}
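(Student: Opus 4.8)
The plan is to prove the three claims in turn: (i) $P(u)$ is the unique function in $E$ equal to $u$ on $F$ and affine on the intervals of $[0,1]\setminus F$; (ii) $u\mapsto P(u)$ is linear; (iii) $u\mapsto P(u)$ is $1$-Lipschitz (hence preserves uniform convergence).

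For (i), first I would check that $P(u)$ is well defined and continuous. For $t\in F$ we have $t^-=t^+=t$, so $P(u)(t)=u(t)$, giving the interpolation property. For $t\notin F$, since $F$ is compact and contains $0$ and $1$, the sup and inf in $t^\pm$ are attained, $t^-,t^+\in F$, $t^-<t<t^+$, and $(t^-,t^+)$ is a connected component of $[0,1]\setminus F$; on this whole interval the formula $P(u)(s)=u(t^-)w_-(s)+u(t^+)w_+(s)$ is an affine function of $s$ (the weights $w_\pm$ are affine in $s$), so $P(u)$ is affine on each such interval. Continuity at interior points of $F$ and at endpoints of the complementary intervals follows since the affine pieces agree with $u$ at the endpoints; continuity at a general point of $F$ that is a limit of complementary intervals needs a short argument using the continuity of $u$ on $F$ — the value $P(u)(s)$ for $s$ near $t$ is a convex combination of $u(s^-)$ and $u(s^+)$ with $s^-,s^+\to t$, so it converges to $u(t)$. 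Uniqueness is immediate: any $g\in E$ agreeing with $u$ on $F$ and affine on each component of $[0,1]\setminus F$ is determined on $F$ by $u$ and on each component by its (affine) boundary values, which are prescribed.

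For (ii), linearity is a direct consequence of the formula: $t^\pm$, $w_\pm(t)$ depend only on $t$ and $F$, not on $u$, so $P(au+bv)(t)=(au+bv)(t^-)w_-(t)+(au+bv)(t^+)w_+(t)=a\,P(u)(t)+b\,P(v)(t)$ for all $t$. For (iii), fix $t\in[0,1]$. Since $w_-(t)+w_+(t)=1$ and $w_\pm(t)\in[0,1]$, the value $P(u)(t)$ is a convex combination of $u(t^-)$ and $u(t^+)$, so $|P(u)(t)|\le w_-(t)|u(t^-)|+w_+(t)|u(t^+)|\le \|u\|_\infty$. Taking the max over $t$ gives $\|P(u)\|_\infty\le\|u\|_\infty$. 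By linearity, $\|P(u)-P(v)\|_\infty=\|P(u-v)\|_\infty\le\|u-v\|_\infty$, so $P$ is $1$-Lipschitz; and since $P$ is a $1$-Lipschitz linear map, if $u_n\to u$ uniformly on $F$ then $P(u_n)\to P(u)$ uniformly on $[0,1]$.

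The only step requiring genuine care is the continuity of $P(u)$ at points of $F$ that are accumulation points of complementary intervals (and the fact that the convex-combination structure controls everything uniformly); the rest is bookkeeping. I would handle that continuity point by noting that for any $s$, writing $\rho(s)=\max(s^+-s,s-s^-)\le |s^+-s^-|$ which tends to $0$ as $s\to t\in F$ (because $t\in F$ forces the surrounding complementary interval, if any, to shrink), and then $|P(u)(s)-u(t)|\le w_-(s)|u(s^-)-u(t)|+w_+(s)|u(s^+)-u(t)|\le M_u(|s-t|+\rho(s))$, which goes to $0$ by continuity of $u$ on $F$. This is the main obstacle, and it is mild.
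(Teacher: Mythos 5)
Your proof is correct, but note that the paper itself does not prove this proposition: it simply cites Proposition 4.4 of \citep{bachoc2022sequential}, so you have supplied a self-contained argument where the paper defers to a reference. The structure you use (well-definedness and affineness on the components $(t^-,t^+)$ of $[0,1]\setminus F$, uniqueness from the prescribed boundary values on each component, linearity because $t^{\pm}$ and $w_{\pm}$ depend only on $t$ and $F$, and the convex-combination bound $\|P(u)\|_\infty \le \|u\|_\infty$ giving $1$-Lipschitzness and preservation of uniform convergence) is the natural one and is sound. One sentence in your continuity argument overreaches: the claim that $\rho(s)=\max(s^+-s,\,s-s^-)\to 0$ as $s\to t\in F$ is false in general --- if $t$ is the endpoint of a complementary interval whose length is bounded away from zero, then $\rho(s)$ does not vanish as $s$ approaches $t$ from inside that interval (take $F=\{0\}\cup[1/2,1]$ and $t=0$). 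However, you have already disposed of that case separately, since on such an interval $P(u)$ is affine and agrees with $u(t)$ at the endpoint; the $\rho(s)\to 0$ argument is valid precisely on the side(s) from which $t$ is an accumulation point of $F$, which is the only case it needs to cover. Making explicit that the two sides of $t$ are treated separately would close this small presentational loophole; there is no mathematical gap.
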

\begin{proof}
	See (\citep{bachoc2022sequential}, Proposition 4.4).   
\end{proof}

We recall the constraint set $C$, which is assumed to be closed and convex in $E$. 
Let us then define the set of functions on $F$ which multi-affine extensions are in $C$.  
\begin{equation} \label{eq:CF}
	C_F = \{f \in E_F:   P(f) \in C\}.
\end{equation}
Since $P$ is linear and $C$ is convex,  $C_F \ \mbox{is a convex set of} \ E_F$.
\begin{condition} \label{cond:HfintCf}
	$ \HF \cap C_F \neq \emptyset. $
\end{condition}
Condition~\ref{cond:HfintCf} guarantees the compatibility of the constraints with the kernel $K$ and set $F$. This means that there is a function in $\HF$ which multi-affine extension satisfies the constraints. It is clear that our definition of $\widehat{u}_F$ in \eqref{eq:constrainedSpline} below needs this condition, where  $\widehat{u}_F$ is discussed in Section~\ref{sec:intro}. For the standard cases where $C$ is the a set  of bounded, monotonous or convex functions, Condition~\ref{cond:HfintCf} holds for many standard classes of kernels. In particular, when $C$ is the set of functions bounded in $[\ell,u]$ for fixed $- \infty < \ell < u < \infty$, then Condition~\ref{cond:HfintCf} holds for all Mat\'ern kernels \citep{stein1999interpolation}. Indeed, the constant function $(\ell+u)/2$ satisfies the constraints and belongs to the corresponding RKHS on $[0,1]$, since it can be extended to $\mathbb{R}$ by multiplication by an infinitely differentiable function with compact support and then \citep[Theorem 10.21]{wendland2004scattered} can be applied. Note that similar arguments are provided in \citep{bachoc2017gaussian,LopezLopera2017FiniteGPlinear}.
From \citep[Lemma 4.12]{bachoc2022sequential}, the multi-affine extension of the restriction of this constant function to $F$ also satisfies the constraints.
Also, recall that $\HF= {\cal H}_{|F}$.
Hence $ \HF \cap C_F \neq \emptyset$.
Similarly, when $C$ is the set of non-decreasing functions, Condition~\ref{cond:HfintCf} holds for all Mat\'ern kernels, as shown by considering the identity function on $[0,1]$. Finally, when $C$ is the set of convex functions, Condition~\ref{cond:HfintCf} holds for all Mat\'ern kernels, as shown by considering the function $[0,1] \ni x \mapsto x^2$. 

We consider the constrained optimal smoothing: 
\begin{align}\label{eq:constrainedSpline}
	\widehat{u}_{F} &= \underset{v \in \HF \cap C_F}{\arg \min} J_{F}(v),
\end{align}
where, for $v\in \HF$,  
\begin{align}
	\label{eq:JF}
	J_F(v) &=  \|v\|_{\HF}^2 + \frac{1}{\tau} \sum_{i=1}^{n} ((Pv)(x_i)
	- y_i )^2.
\end{align}

The problem in \eqref{eq:constrainedSpline} is a restriction of the problem in \eqref{constrained} announced in Section~\ref{sec:intro}, taking into account that the closure set $F$ is allowed to be different from $[0,1]$. Next, we introduce the standard notion of strong convexity which we will use in this paper to show the existence and unicity  of $\widehat{u}_F$ and also to obtain our error bounds.

\begin{definition} \label{def:strongConvexity}
	For a Hilbert space $V$ and a function $g : V \to \mathbb{R}$, we say that $g$ is strongly convex with parameter $m$ if and only if for all $u,v \in V$ and $t \in [0,1]$, we have
	$$
	g(tu+(1-t)v)\leq tg(u)+(1-t)g(v) -\frac{m}{2}t(1-t)\| u-v\|_V^2.
	$$
\end{definition}

\begin{proposition} \label{prop:spline_strong_convex}
	The function $v \in \HF \mapsto  J_{F}(v)$ is strongly convex with parameter $m = 2$.
\end{proposition}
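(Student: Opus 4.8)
The plan is to decompose $J_F = g_1 + g_2$ where $g_1(v) = \|v\|_{\HF}^2$ and $g_2(v) = \frac{1}{\tau}\sum_{i=1}^n (P(v)(x_i) - y_i)^2$, show that $g_1$ is strongly convex with parameter exactly $2$, show that $g_2$ is convex (parameter $0$), and then invoke the elementary fact that the sum of a strongly convex function (parameter $m$) and a convex function is strongly convex with the same parameter $m$. This reduces everything to two independent claims, the first of which carries the full strong-convexity constant.

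For $g_1$, I would use the parallelogram-type identity in the Hilbert space $\HF$: for $u,v \in \HF$ and $t \in [0,1]$,
\[
\|tu + (1-t)v\|_{\HF}^2 = t\|u\|_{\HF}^2 + (1-t)\|v\|_{\HF}^2 - t(1-t)\|u-v\|_{\HF}^2.
\]
This is a direct expansion using bilinearity of $\langle \cdot,\cdot\rangle_{\HF}$ and $t^2 = t - t(1-t)$, $(1-t)^2 = (1-t) - t(1-t)$, $2t(1-t) = 2t(1-t)$; multiplying $g_1$ by $1$ it matches Definition 2.5 with $m = 2$ and equality throughout (so the constant $2$ is sharp for this term).

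For $g_2$, I would argue that $v \mapsto P(v)(x_i)$ is an affine (indeed linear) functional on $\HF$ — it equals $v(x_i^-) w_-(x_i) + v(x_i^+) w_+(x_i)$, a fixed linear combination of point evaluations, which are bounded linear functionals on an RKHS — hence $v \mapsto P(v)(x_i) - y_i$ is affine, and the square of a real affine functional is convex; a nonnegative sum of convex functions scaled by $1/\tau > 0$ is convex. Then I combine: for any $u,v,t$, apply the strong-convexity inequality for $g_1$ and ordinary convexity for $g_2$ and add, obtaining the inequality of Definition 2.5 for $J_F = g_1 + g_2$ with $m = 2$, since the $-\frac{m}{2}t(1-t)\|u-v\|_{\HF}^2$ term comes entirely from $g_1$.

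I do not anticipate a serious obstacle here; the only point requiring a little care is confirming that $v \mapsto P(v)(x_i)$ is well-defined and linear on $\HF$ (it is, because $x_i \in [0,1]$, $x_i^\pm \in F$, point evaluation at points of $F$ is a continuous linear functional on $\HF$ by the reproducing property, and $w_\pm(x_i)$ are fixed scalars). One should also note that $g_2$ is genuinely only convex and not strictly so in general (e.g. if $n$ is small or the evaluation functionals are linearly dependent), which is precisely why the full constant $m = 2$ must be supplied by the RKHS-norm term and cannot be improved by the data-fitting term. If one wanted a self-contained write-up, the sum-of-convex-plus-strongly-convex lemma can simply be proved inline by adding the two defining inequalities.
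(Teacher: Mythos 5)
Your proposal is correct and follows essentially the same route as the paper: decompose $J_F$ into the squared RKHS norm (strongly convex with parameter $2$, which you verify via the exact parallelogram identity rather than citing it as well known) plus the data-fitting term, which is convex because it is the squared Euclidean norm composed with an affine map of $v$, and then use that a strongly convex function plus a convex function is strongly convex with the same parameter. The only difference is that you fill in details the paper leaves implicit (the identity for the norm term and the linearity of $v \mapsto P(v)(x_i)$ via the reproducing property), so there is nothing to correct.
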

\begin{proof}
	It is well-known that the function $v \mapsto  \|v\|_{\HF}^2$ is strongly convex with parameter $m = 2$. Furthermore, the function
	$$
	v \mapsto  \frac{1}{\tau} \sum_{i=1}^{n} (P(v(x_i)) - y_i)^2, 
	$$
	is convex as the composition of the affine function ${v \mapsto \frac{1}{\sqrt{\tau}} (P v(x_i) - y_i)_{i=1,\ldots,n}}$ by the squared Euclidean norm which is convex. Hence, $J_F$, which is the sum of the two functions, is strongly convex with parameter $m = 2$. 
\end{proof}

The next lemma addresses the optimization space $\HF \cap C_F$ in \eqref{eq:constrainedSpline}. To prove this lemma and throughout this paper, we will exploit the reproducing property \citep{berlinet2011reproducing} in the RKHSs ${\cal H}$ and $\HF$: for any $f \in {\cal H}$ and $x \in [0,1]$, we have $f(x) = \langle f , K(x , \cdot) \rangle_{\cal H}$. Similarly for $f \in \HF$ and $x \in F$, we have $f(x) = \langle f , K(x , \cdot) \rangle_{\HF}$. 

\begin{lemma} \label{lemma:CFClosed}
	The set $\HF \cap C_F$ is a closed subset of $\HF$ w.r.t. $||\cdot||_{\HF}$.
\end{lemma}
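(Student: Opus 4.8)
The plan is to show that $\HF \cap C_F$ is closed by showing it is the intersection of two closed sets: $\HF$ is trivially closed in itself, so it suffices to show that $\HF \cap C_F$ is closed in $\HF$, which amounts to showing $C_F \cap \HF$ is sequentially closed with respect to $\|\cdot\|_{\HF}$. So first I would take a sequence $(f_n)_{n\geq 1}$ in $\HF \cap C_F$ converging in $\|\cdot\|_{\HF}$-norm to some $f \in \HF$, and aim to prove $f \in C_F$, i.e. $P(f) \in C$.

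The key step is to pass from $\|\cdot\|_{\HF}$-convergence to pointwise (indeed uniform) convergence on $F$. This is where the reproducing property enters: for each $x \in F$, $|f_n(x) - f(x)| = |\langle f_n - f, K(x,\cdot)\rangle_{\HF}| \leq \|f_n - f\|_{\HF}\, \|K(x,\cdot)\|_{\HF} = \|f_n - f\|_{\HF}\, \sqrt{K(x,x)}$ by Cauchy--Schwarz. Since $K$ is continuous on the compact $F$ (it is $\beta$-H\"older by Condition~\ref{cond:contKernel}), $\sup_{x\in F}\sqrt{K(x,x)}$ is finite, so $f_n \to f$ uniformly on $F$, hence in $E_F$ with respect to $\|\cdot\|_\infty$. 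Now apply the proposition stating that $P : E_F \to E$ is $1$-Lipschitz for the supremum norm and preserves uniform convergence: $P(f_n) \to P(f)$ in $E$ equipped with $\|\cdot\|_\infty$. Since each $f_n \in C_F$ we have $P(f_n) \in C$, and $C$ is closed in $E$ by assumption, so the limit $P(f)$ lies in $C$, i.e. $f \in C_F$. Combined with $f \in \HF$ this gives $f \in \HF \cap C_F$, establishing closedness.

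I do not expect a serious obstacle here; the only point requiring a little care is the uniform (rather than merely pointwise) bound in the Cauchy--Schwarz step, which is handled by the compactness of $F$ and the continuity of $x \mapsto K(x,x)$. One could alternatively argue pointwise convergence only and then invoke that $C_F$ is defined through $P$, but routing everything through uniform convergence makes the appeal to the closedness of $C$ in $(E, \|\cdot\|_\infty)$ cleanest.
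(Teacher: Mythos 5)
Your proposal is correct and follows essentially the same route as the paper's proof: the reproducing property plus Cauchy--Schwarz yields $\|v_n-v\|_\infty \le \|v_n-v\|_{\HF}\sup_{t\in F}\sqrt{K(t,t)}$, hence uniform convergence on $F$, and then the $1$-Lipschitzness of $P$ together with the closedness of $C$ in $E$ gives $P(v)\in C$. No gaps.
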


\begin{proof}
	We have $	  \HF \cap C_F= \{v \in \HF:   Pv \in C\}$. Let us fix $v \in \HF$ and consider a sequence $v_n \in \HF \cap C_F$ such that $v_n \to v$ for $||\cdot||_{\HF}$. By the reproducing property, 
	\[
	||v_n - v||_{\infty} \leq ||v_n - v||_{\HF} \sup_{t \in F} \sqrt{K(t,t)}.
	\]
	As $K$ is continuous, we deduce that $||v_n - v||_{\infty} \to 0$. Now $P$ is 1-Lipschitz with respect to  $||\cdot||_{\infty}$ and thus continuous. Hence $P v_n \to Pv$ for $||\cdot||_{\infty}$. As $C$ is a closed subset of $E$, then $Pv \in C$. This proves that $v \in C_F$ and thus    $v \in \HF \cap C_F$.
\end{proof}

We now state the existence and unicity  of $\widehat{u}_{F}$, and additionally quantify its regularity. 
\begin{theorem}\label{theo:spline_strong_convex}
	The constrained optimal smoothing problem in \eqref{eq:constrainedSpline} has a unique solution $\widehat{u}_{F}$. Moreover $\widehat{u}_{F}$ is $\frac{\beta}{2}$-H\"older continuous with constant $\displaystyle\sqrt{2c_K}\| \widehat{u}_{F}\|_{\HF} $, recalling $c_K$ from Condition~\ref{cond:contKernel}.
\end{theorem}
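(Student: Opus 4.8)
The plan is to split the statement into two parts: existence and uniqueness of $\widehat{u}_F$, and then the Hölder regularity of the minimizer.

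For existence and uniqueness, I would invoke the standard variational argument. By Proposition~\ref{prop:spline_strong_convex}, $J_F$ is strongly convex on $\HF$ with parameter $m=2$; in particular it is strictly convex and coercive (since $J_F(v) \geq \|v\|_{\HF}^2$, it tends to $+\infty$ as $\|v\|_{\HF} \to \infty$). By Lemma~\ref{lemma:CFClosed}, the feasible set $\HF \cap C_F$ is closed in $\HF$, and it is convex (noted in the excerpt, since $P$ is linear and $C$ convex); it is nonempty by Condition~\ref{cond:HfintCf}. Minimizing a lower semicontinuous, coercive, strictly convex function over a nonempty closed convex subset of a Hilbert space yields a unique minimizer — one takes a minimizing sequence, extracts a weakly convergent subsequence using coercivity and reflexivity, uses weak lower semicontinuity of $J_F$ (convex + strongly/norm continuous) and weak closedness of the convex closed set to conclude the weak limit is a minimizer, and strict convexity forces uniqueness. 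This is entirely routine.

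For the regularity, the key observation is that $\widehat{u}_F \in \HF$, so by the reproducing property $\widehat{u}_F(s) - \widehat{u}_F(t) = \langle \widehat{u}_F, K(s,\cdot) - K(t,\cdot)\rangle_{\HF}$, and Cauchy--Schwarz gives
\[
|\widehat{u}_F(s) - \widehat{u}_F(t)| \leq \|\widehat{u}_F\|_{\HF}\, \|K(s,\cdot) - K(t,\cdot)\|_{\HF}.
\]
Then I would compute $\|K(s,\cdot) - K(t,\cdot)\|_{\HF}^2 = K(s,s) - 2K(s,t) + K(t,t)$ by expanding and using the reproducing property $\langle K(s,\cdot), K(t,\cdot)\rangle_{\HF} = K(s,t)$. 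Finally, bound this using Condition~\ref{cond:contKernel}: write $K(s,s) - K(s,t) \leq |K(s,s) - K(s,t)| \leq c_K |s-t|^\beta$ and likewise $K(t,t) - K(s,t) = K(t,t)-K(t,s) \leq c_K|s-t|^\beta$, so the sum is at most $2c_K|s-t|^\beta$. Taking square roots yields $|\widehat{u}_F(s) - \widehat{u}_F(t)| \leq \sqrt{2c_K}\,\|\widehat{u}_F\|_{\HF}\,|s-t|^{\beta/2}$, which is exactly the claimed $\tfrac{\beta}{2}$-Hölder bound with constant $\sqrt{2c_K}\,\|\widehat{u}_F\|_{\HF}$. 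One small point to be careful about: $s,t$ range over $[0,1]$ in the statement, whereas $\widehat{u}_F$ is a priori only defined on $F$; here one uses that $\widehat{u}_F \in \HF = \mathcal{H}_{|F}$ and the same reproducing-kernel computation actually controls the modulus of continuity on all of $[0,1]$ through $K(\cdot,\cdot)$, or equivalently one works with the canonical extension in $\mathcal{H}$ — this is worth a sentence.

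I do not expect any serious obstacle; the only mild subtlety is bookkeeping the distinction between $\HF$-norms and $\mathcal{H}$-norms and the domain of $\widehat{u}_F$, but since $\HF = \mathcal{H}_{|F}$ with matching reproducing kernel $K$, the computations go through verbatim. The existence/uniqueness part is standard Hilbert-space convex optimization given the three ingredients (closedness from Lemma~\ref{lemma:CFClosed}, convexity, strong convexity from Proposition~\ref{prop:spline_strong_convex}), and the regularity part is a two-line reproducing-property plus Cauchy--Schwarz estimate feeding on Condition~\ref{cond:contKernel}.
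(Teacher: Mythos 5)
Your proposal is correct and follows essentially the same route as the paper: existence and uniqueness via coercivity, strict convexity (Proposition~\ref{prop:spline_strong_convex}), and the nonempty closed convex feasible set (Condition~\ref{cond:HfintCf} and Lemma~\ref{lemma:CFClosed}) — the paper simply cites a standard theorem of convex optimization where you unpack the direct method — and the Hölder bound via the identical reproducing-property/Cauchy--Schwarz computation $\|K(s,\cdot)-K(t,\cdot)\|_{\HF}^2 = K(s,s)-2K(s,t)+K(t,t) \leq 2c_K|s-t|^\beta$. Your remark about the domain of $\widehat{u}_F$ ($F$ versus $[0,1]$) is a fair point of bookkeeping that the paper's proof passes over silently.
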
 
\begin{proof}
	As $J_F(v) \geq ||v||_{H_{F}}^2$, we have $\lim_{\| v\| \mapsto\infty} J_F(v)=\infty $. Furthermore, as $C_F$ is a convex set of $E_F$, as $ \HF \cap C_F \neq \emptyset$   (Condition~\ref{cond:HfintCf})   and from Lemma~\ref{lemma:CFClosed},  $\HF \cap C_F  $ is a non empty closed convex set of $E_F$. By Proposition~\ref{prop:spline_strong_convex}, $J_F$ is strictly convex. Then, as $J_F$ is clearly continuous on $\HF$, by 
	(\citep{hiriart}, Theorem 1.18), \eqref{eq:constrainedSpline} has a unique solution and, as $K$  is  $\beta$-H\"older continuous, we have:
	\begin{align*}
		| \widehat{u}_{F}(t)-\widehat{u}_{F}(s) | 
		&= | \langle \widehat{u}_{F}(\cdot),K(\cdot,t)-K(\cdot,s) \rangle_{\HF} |
		\\
		&\leq \| \widehat{u}_{F}\|_{\HF} \|  K(\cdot,t)-K(\cdot,s)\|_{\HF}
		\\
		&\leq \| \widehat{u}_{F}\|_{\HF}  \left(  K(t,t)-K(t,s)+K(s,s)-K(s,t) \right)^{1/2}
		\\
		&\leq \| \widehat{u}_{F}\|_{\HF}  \displaystyle\sqrt{2c_K} | s-t| ^{\beta/2}.
	\end{align*}
\end{proof}

\section{The discrete constrained optimal smoothing problem} \label{section:discrete:constrained:optimal:smoothing}

We now introduce the finite-dimensional spaces where the approximate to $\widehat{u}_F$ is sought. First, we deal with the notion of grid size compatible with the closure set $F$ of the knots discussed in Sections~\ref{sec:intro} and~\ref{sec:cosp}. Then, we introduce the hat basis functions given by the knots and yielding the piecewise linear approximation   $\widehat{u}_{N,F}$ discussed in Section~\ref{sec:intro}.

We consider a sequence of nested subdivisions, i.e. sets of knots $S_{N} \subset S_{N+1}$ with $S_N: \quad 0=t_{1} <  \cdots < t_{N}= 1$. As explained in Section~\ref{sec:cosp}, $F$ is the closure of $ \underset{N \geq 1}{\bigcup} S_N$ , i.e.
\begin{equation*}
	F = \overline{\underset{N \geq 1}{\bigcup} S_N}.
\end{equation*}
Hence $F$ is a compact set of $[0,1]$ containing $0$ and $1$. The nodes of $S_N$   allow the construction of the finite-dimensional approximation spaces, as we will detail below.

Before tackling this approximation space, we first define the grid size of $S_N$. As the set $F$ is allowed to have holes, it is insufficient to define the grid size simply as $\max_{i=1}^{N-1} | t_{i+1}-t_i |$. Indeed, when $F \neq [0,1]$, this quantity will not tend to zero as $N \to \infty$. To overcome this issue, we need a more nuanced definition of the grid size. For $t \in F$, we define two nodes $t_{S_N}^- \in S_N $ and $t_{S_N}^+ \in S_N$, which are, respectively, the closest grid points of $S_N$ to $t$ on its left and right. In other words, if $t \in S_N$, then $t_{S_N}^- = t_{S_N}^+ = t$, otherwise
\begin{align}\label{tSN}
	t_{S_N}^-   =  \max \{ x, x \in S_N, x \leq t\}, \qquad
	t_{S_N}^{+} =  \min \{ x, x \in S_N, x \geq t\}.
\end{align}
Thus, we can define the grid size as  
\begin{equation}\label{delta_N}
	\delta_N = \sup_{ t \in F}  \min\left(| t-t_{S_N}^- |, | t-t_{S_N}^+ | \right).
\end{equation}
Note that since $F$ is the closure of $S_N$, we have $\delta_N \to 0$ as $N \to \infty$, which is the benefit of the definition in~\eqref{delta_N}. 

We now proceed to formally define the piecewise linear approximation.
We define the set of hat functions  $(\phi_{1},\ldots,\phi_{N})$, similarly as in \citep{bachoc2022sequential},
\begin{equation*}
	\phi_i(t) = \begin{cases}
		\dfrac{t - t_{i-1}}{t_i - t_{i-1}} & \text{if } i \geq 1 \text{ and } t \in [t_{i-1}, t_i], \\ 
		\dfrac{t_{i+1} - t}{t_{i+1} - t_{i}} & \text{if } i \leq N
		-1 \text{ and } t \in [t_{i}, t_{i+1}], \\ 
		0 & \text{otherwise},
	\end{cases}
\end{equation*}
with the convention that $t_0 = -1$ and $t_{N+1} = 2$. Note that a linear combination of the $\phi_i$'s is a piecewise linear function. 

Then, as in \citep{bay2016generalization,bay2017new,GBM}, we place an RKHS structure on the linear combinations of the $\phi_i$'s, by defining a kernel $K_N$. This kernel can also be seen as an approximation of $K$, thus corresponding to a finite-dimensional approximation of a GP with kernel $K$. We let $\HNF$ be the finite-dimensional subspace of $E_F$ defined by  
\begin{align*}
	\HNF = \operatorname{Span}\{ \phi_{i |F}, i = 1 , \ldots, N\}.
\end{align*}
Here, for a function $g: [0,1] \to \mathbb{R}$, we let $g_{|F}$ be the restriction of $g$ to $F$, that is the function $F \to \mathbb{R}$ defined by, for $x \in F$, $g_{|F}(x) = g(x)$.
On $\Hilb_{N,F}$, we now construct the kernel $K_N$ to obtain an RKHS. For this we first need to define  the matrix 
$
\Gamma_N = (K(t_i,t_j))_{1\leq i,j\leq N},
$
and we assume Condition~\ref{cond:kernelStricPos}.
\begin{condition} \label{cond:kernelStricPos}
	$\Gamma_N$ is invertible.
\end{condition}
This condition is verified for a strictly positive definite kernel, for instance the squared exponential kernel (see e.g., \citep{karlin1966t}, Chapter 1, Section 3, Example 5).
\medskip

Then, let us define the following inner product on $\HNF$ :
\begin{equation}\label{scaN}
	\langle u, v \rangle_{N} = c_{u}^\top \Gamma_N^{-1} c_{v},
\end{equation}
where $c_{u} = [u(t_1), \ldots, u(t_N)]^\top$ and $c_{v} = [v(t_1), \ldots, v(t_N)]^\top$. 
Write $\| \cdot \|_N$ for the corresponding norm. We can now state the RKHS structure of $\HNF$.
\begin{proposition}\label{prop3}[\citep{bay2017new}, Theorem 1]
	The space  $\HNF$, equipped with the scalar product in \eqref{scaN}, is an RKHS with kernel $K_N$ given by
	\begin{equation*}
		\forall x, x' \in F, \qquad K_N(x, x')= \sum_{i,j=1}^N K(t_i,t_{j})\phi_{i}(x) \phi_{j}(x').
	\end{equation*}
\end{proposition}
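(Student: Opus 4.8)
The plan is to verify directly that the bilinear form $\langle \cdot, \cdot \rangle_N$ defined in~\eqref{scaN} is a genuine inner product on $\HNF$, and then exhibit $K_N$ as its reproducing kernel. The crucial structural fact is that the evaluation map $u \mapsto c_u = [u(t_1),\ldots,u(t_N)]^\top$ is a \emph{linear bijection} from $\HNF$ onto $\R^N$: a function in $\HNF = \operatorname{Span}\{\phi_{i|F}\}$ is determined by its coefficients in the hat basis, and since $\phi_i(t_j) = \delta_{ij}$ (Kronecker delta, by construction of the hat functions), the coefficient vector of $u = \sum_j c_j \phi_{j|F}$ is exactly $(c_1,\ldots,c_N) = (u(t_1),\ldots,u(t_N)) = c_u$. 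Hence $\dim \HNF = N$ and the map $u \mapsto c_u$ identifies $\HNF$ with $\R^N$.

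First I would check that $\langle \cdot,\cdot\rangle_N$ is a well-defined inner product. Symmetry and bilinearity are immediate from the formula $c_u^\top \Gamma_N^{-1} c_v$. For positive definiteness: $\Gamma_N$ is symmetric positive semi-definite as a kernel Gram matrix, and invertible by Condition~\ref{cond:kernelStricPos}, hence positive definite; therefore $\Gamma_N^{-1}$ is positive definite, so $\langle u,u\rangle_N = c_u^\top \Gamma_N^{-1} c_u \geq 0$ with equality iff $c_u = 0$, i.e.\ iff $u = 0$ by the bijection above. So $(\HNF, \langle\cdot,\cdot\rangle_N)$ is a finite-dimensional (hence complete) inner product space, and every evaluation functional is continuous; it remains only to identify the reproducing kernel.

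Next I would define, for fixed $x' \in F$, the function $g_{x'} := K_N(\cdot, x') = \sum_{i,j=1}^N K(t_i,t_j)\phi_i(\cdot)\phi_j(x')$, which by construction lies in $\HNF$ (it is a linear combination of the $\phi_i$). Its coefficient vector is $c_{g_{x'}}$ with $i$-th entry $g_{x'}(t_i) = \sum_{j} K(t_i,t_j)\phi_j(x')$ — using again $\phi_k(t_i) = \delta_{ki}$ — so in matrix form $c_{g_{x'}} = \Gamma_N \varphi(x')$ where $\varphi(x') = [\phi_1(x'),\ldots,\phi_N(x')]^\top$. Then for any $u \in \HNF$,
\begin{align*}
\langle u, K_N(\cdot,x')\rangle_N
= c_u^\top \Gamma_N^{-1} c_{g_{x'}}
= c_u^\top \Gamma_N^{-1} \Gamma_N \varphi(x')
= c_u^\top \varphi(x')
= \sum_{i=1}^N u(t_i)\phi_i(x')
= u(x'),
\end{align*}
the last equality because $u = \sum_i u(t_i)\phi_{i|F}$ on $F$. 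This is exactly the reproducing property, so $K_N$ is the reproducing kernel of $(\HNF, \langle\cdot,\cdot\rangle_N)$, completing the proof.

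I do not anticipate a serious obstacle: the argument is essentially bookkeeping once one records that $\phi_i(t_j) = \delta_{ij}$ and that evaluation at the knots is a bijection onto $\R^N$. The only point requiring mild care is confirming that $g_{x'}$ really is an element of $\HNF$ (clear, since it is a fixed linear combination of the $\phi_i$ with coefficients $\sum_j K(t_i,t_j)\phi_j(x')$) and that its coefficient vector is genuinely $\Gamma_N\varphi(x')$ and not its transpose — which follows from the symmetry of $\Gamma_N$. Since this is exactly Theorem~1 of~\citet{bay2017new}, one could alternatively just cite it, but including the short computation above makes the exposition self-contained.
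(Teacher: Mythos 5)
Your proof is correct: the identification of $\HNF$ with $\R^N$ via $\phi_i(t_j)=\delta_{ij}$, the positive definiteness of $\Gamma_N^{-1}$ under Condition~\ref{cond:kernelStricPos}, and the computation $c_{g_{x'}}=\Gamma_N\varphi(x')$ yielding $\langle u, K_N(\cdot,x')\rangle_N = u(x')$ are all sound. The paper itself gives no in-text argument here --- it simply defers to Theorem~1 of \citet{bay2017new} --- and your direct verification is exactly the standard proof that reference supplies, so there is nothing to compare beyond noting that you have made explicit what the paper leaves as a citation.
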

\begin{proof}
	See the proof in (\citep{bay2017new}, Theorem 1). 
\end{proof}

These tools allow us to define the discrete problem which will approximate the constrained optimal smoothing problem in \eqref{eq:constrainedSpline}.
We can approximate~\eqref{eq:JF} by the following functional,
for   $u_N \in \HNF$,  
\begin{align}	 
	\label{eq:JNF}
	J_{N,F}(u_{N}) &=\|u_N\|_{N}^2+  \frac{1}{\tau} \sum_{i=1}^{n} (P(u_N)(x_i) - y_i)^2.
\end{align}
Hence we approximate $\widehat{u}_{F}$ by $\widehat{u}_{N,F}$ solution of 
\begin{align}\label{optN}
	\widehat{u}_{N,F} &= \underset{v_N \in \HNF \cap C_F}{\arg \min} J_{N,F}(v_N). 
\end{align}
Note that when $F = [0,1]$, $\widehat{u}_{N,F}$ corresponds to the approximant of $\widehat{u}_N$. 

\begin{proposition} \label{prop:JNF:Frechet}
	$J_{N,F}$ is Fr\'echet differentiable, strongly convex with parameter 2,  and   
	\begin{equation*}
		\lim\limits_{\|v_{N}\|_{N} \to \infty} J_{N,F}(v_{N}) = \infty.
	\end{equation*}
\end{proposition}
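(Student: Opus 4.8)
The plan is to prove the three claimed properties of $J_{N,F}$ separately, mirroring the structure already used for $J_F$ in Proposition \ref{prop:spline_strong_convex} and Theorem \ref{theo:spline_strong_convex}, since $J_{N,F}$ has the same additive form: a squared RKHS norm plus a data-fit term built from the multi-affine extension $P$. The key simplification is that everything takes place in the finite-dimensional space $\HNF$, on which $\|\cdot\|_N$ is a genuine Hilbert norm (Proposition \ref{prop3}), so $J_{N,F}$ is a real function on a finite-dimensional normed space and Fr\'echet differentiability reduces to checking that each summand is differentiable.

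First I would handle strong convexity. The map $u_N \mapsto \|u_N\|_N^2$ is strongly convex with parameter $2$: this is the standard identity $\|t u + (1-t) v\|_N^2 = t\|u\|_N^2 + (1-t)\|v\|_N^2 - t(1-t)\|u-v\|_N^2$ valid in any inner product space, which gives exactly the bound in Definition \ref{def:strongConvexity} with $m=2$. For the data-fit term $u_N \mapsto \frac{1}{\tau}\sum_{i=1}^n (P(u_N)(x_i) - y_i)^2$, note that $u_N \mapsto P(u_N)$ is linear (by the proposition on $P$ after Definition \ref{def:multi-affine:extension}) and evaluation at $x_i$ is linear, so $u_N \mapsto \frac{1}{\sqrt\tau}(P(u_N)(x_i) - y_i)_{i=1,\dots,n}$ is affine from $\HNF$ to $\mathbb{R}^n$; composing with the (convex) squared Euclidean norm yields a convex function. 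The sum of a strongly convex function with parameter $2$ and a convex function is strongly convex with parameter $2$, exactly as in the proof of Proposition \ref{prop:spline_strong_convex}.

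Next, coercivity: since the data-fit term is a sum of squares it is nonnegative, so $J_{N,F}(v_N) \geq \|v_N\|_N^2$, and hence $J_{N,F}(v_N) \to \infty$ as $\|v_N\|_N \to \infty$. For Fr\'echet differentiability, on the finite-dimensional Hilbert space $(\HNF, \langle\cdot,\cdot\rangle_N)$ the function $u_N \mapsto \|u_N\|_N^2$ is smooth with derivative $v_N \mapsto 2\langle u_N, v_N\rangle_N$; each map $u_N \mapsto P(u_N)(x_i)$ is a bounded linear functional on the finite-dimensional space, hence Fr\'echet differentiable, so each squared term $(P(u_N)(x_i) - y_i)^2$ is Fr\'echet differentiable (composition of an affine map with the smooth squaring map), and a finite sum of Fr\'echet differentiable maps is Fr\'echet differentiable. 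I do not expect any genuine obstacle here; the only mild point to be careful about is that $P(u_N)(x_i)$ depends on $u_N$ only through its values at the knots $t_1,\dots,t_N$ (via $u_N(x_i^-)$ and $u_N(x_i^+)$), so writing $P(u_N)(x_i) = a_i^\top c_{u_N}$ for a fixed vector $a_i \in \mathbb{R}^N$ makes the linearity and differentiability completely explicit. This is the step I would spell out most carefully, after which the three assertions follow immediately.
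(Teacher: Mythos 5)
Your proposal is correct and follows essentially the same route as the paper, which declares Fr\'echet differentiability obvious, obtains strong convexity with parameter $2$ by the same argument as in Proposition \ref{prop:spline_strong_convex} (squared norm strongly convex plus convex data-fit term built from the affine map $v \mapsto \frac{1}{\sqrt{\tau}}(P(v)(x_i)-y_i)_i$), and gets coercivity from $J_{N,F}(v_N) \geq \|v_N\|_N^2$. The only difference is that you spell out the differentiability step explicitly via the finite-dimensional representation $P(u_N)(x_i) = a_i^\top c_{u_N}$, which is a harmless (and arguably welcome) elaboration of what the paper leaves implicit.
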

\begin{proof}
	It can be easily deduced that $J_N$ is Fréchet differentiable. With the same arguments as in the proof of Proposition~\ref{prop:spline_strong_convex},
	$J_{N,F}$ is strongly convex with parameter 2. 
	Finally, as $J_{N,F}(v_{N}) \geq \|v_N\|_{N}^2$ then $\lim_{\|v_{N}\|_{N} \to \infty} J_{N,F}(v_{N}) = \infty.$
\end{proof}

Then, we define  $\pi_N$ as the piecewise affine interpolation  associated to the subdivision $S_N$,  defined from $E_F$  onto $\HNF$ by
\begin{equation*}
	\forall f\in E_F, \quad \pi_N (f)=\sum\limits_{j=1}^{N} f(t_{j})\phi_{j |F}.
\end{equation*}
Notice that $\pi_N$ is a projection in the sense that $\pi_N \circ \pi_N = \pi_N$. The following proposition provides a first approximation property for this projection. However, it does not offer a quantitative measure of the approximation's quality. This will be addressed in Section~\ref{section:quantitative:properties}. 

\begin{proposition}\label{stability} 
	For any $f\in \HF$, we have $
	\|\pi_N(f)\|_{N}\leq \|f\|_{\HF}
	$, and 
	\begin{equation*}\label{PN}
		\pi_N(f) \xrightarrow[N\to \infty]{} f \quad \mbox{in $E_F$}.
	\end{equation*}  
	Moreover $\HF$ is characterized by  
	\begin{equation*}\label{newdefH}
		\HF = \Big\{f\in E_F~: \  \sup_{N} \|\pi_N(f)\|_{N}<\infty \Big\},
	\end{equation*}
	and, for all $f\in \HF$, by 
	$
		\|f\|_{\HF}^2= \lim_{N\to \infty}\|\pi_N(f)\|_{N}^2. 
		$
\end{proposition}
\begin{proof}
	See (\citep{bay2016generalization}, Theorem 3.1).
\end{proof}
In Proposition~\ref{stability}, and throughout the paper, recall that the convergence in $E_F$ is defined with the uniform norm.
We make the following assumption: 
\begin{condition} \label{constraints:stable:projection}
	$\pi_N(C_F)\subset C_F$.
\end{condition}
This assumption holds for boundedness, monotonicity and convexity constraints (\citep{bachoc2022sequential}, after Condition 4.11). Finally, we can state the existence, unicity and the bound of $\widehat{u}_{N,F}$.

\begin{theorem}\label{theorem:cont:prob:unique:sol}
	Under Conditions~\ref{cond:contKernel} to \ref{constraints:stable:projection}, \eqref{optN} has a unique solution.
\end{theorem}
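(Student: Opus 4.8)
The plan is to mimic the structure already used for Theorem~\ref{theo:spline_strong_convex}, reducing the problem to the standard existence--uniqueness result for minimizing a strictly convex, coercive, continuous function over a nonempty closed convex subset of a Hilbert space \citep[][Theorem 1.18]{hiriart}, here with the Hilbert space being the finite-dimensional $\HNF$ equipped with $\langle\cdot,\cdot\rangle_N$. Three ingredients must be checked: (i) $\HNF\cap C_F$ is nonempty, closed and convex in $\HNF$; (ii) $J_{N,F}$ is strictly convex and continuous on $\HNF$; (iii) $J_{N,F}$ is coercive on $\HNF$. Items (ii) and (iii) are already supplied by Proposition~\ref{prop:JNF:Frechet} (strong convexity with parameter $2$, hence strict convexity, Fr\'echet differentiability hence continuity, and $J_{N,F}(v_N)\ge\|v_N\|_N^2\to\infty$). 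So the real work is item (i), and within it the nonemptiness, which is exactly where Conditions~\ref{cond:HfintCf} and~\ref{constraints:stable:projection} enter.

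First I would argue convexity: $C_F$ is convex in $E_F$ (noted after Condition~\ref{cond:HfintCf}) and $\HNF$ is a linear subspace, so the intersection is convex. Next, closedness: since $\HNF$ is finite-dimensional, the norm $\|\cdot\|_N$ and the uniform norm $\|\cdot\|_\infty$ are equivalent on $\HNF$; a $\|\cdot\|_N$-convergent sequence in $\HNF\cap C_F$ therefore converges uniformly, $P$ is $1$-Lipschitz hence continuous for $\|\cdot\|_\infty$, and $C$ is closed in $E$, so the limit lies in $C_F$ and in $\HNF$ — this is the same argument as Lemma~\ref{lemma:CFClosed}, just transported to the finite-dimensional setting. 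For nonemptiness, I would take any $g\in\HF\cap C_F$, which exists by Condition~\ref{cond:HfintCf}, and consider $\pi_N(g)$. By definition $\pi_N(g)\in\HNF$; by Proposition~\ref{stability}, $\|\pi_N(g)\|_N\le\|g\|_{\HF}<\infty$; and by Condition~\ref{constraints:stable:projection}, $\pi_N(C_F)\subset C_F$, so $\pi_N(g)\in C_F$. Hence $\pi_N(g)\in\HNF\cap C_F\neq\emptyset$.

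With (i), (ii), (iii) in hand I would invoke \citep[][Theorem 1.18]{hiriart} to conclude that \eqref{optN} has a unique minimizer $\widehat{u}_{N,F}$. I expect no serious obstacle here: the proof is essentially bookkeeping, assembling Proposition~\ref{prop:JNF:Frechet}, Proposition~\ref{stability}, Lemma~\ref{lemma:CFClosed} (adapted), and Conditions~\ref{cond:HfintCf} and~\ref{constraints:stable:projection}. The one point requiring a line of care is the closedness of $\HNF\cap C_F$ as a subset of the Hilbert space $(\HNF,\|\cdot\|_N)$ rather than of $E_F$: one must note that finite-dimensionality makes $\|\cdot\|_N$-closedness equivalent to $\|\cdot\|_\infty$-closedness, after which the $P$-and-$C$ argument of Lemma~\ref{lemma:CFClosed} applies verbatim. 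Alternatively, since we only need existence of a minimizer of a coercive continuous strictly convex function over a nonempty closed convex set, one could even observe that $J_{N,F}$ restricted to $\HNF\cap C_F$ attains its infimum on the compact set $\{v_N\in\HNF\cap C_F : J_{N,F}(v_N)\le J_{N,F}(\pi_N(g))\}$ (closed and bounded in finite dimension), with uniqueness from strict convexity; I would mention this as the quickest route.
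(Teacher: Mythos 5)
Your proposal is correct and follows essentially the same route as the paper: nonemptiness of $\HNF\cap C_F$ via Conditions~\ref{cond:HfintCf} and~\ref{constraints:stable:projection} applied to $\pi_N(g)$, closedness and convexity by transporting the argument of Lemma~\ref{lemma:CFClosed}, and then the existence--uniqueness result of \citep[][Theorem 1.18]{hiriart} using the strong convexity, continuity and coercivity of $J_{N,F}$ from Proposition~\ref{prop:JNF:Frechet}, exactly as in the proof of Theorem~\ref{theo:spline_strong_convex}. Your added remarks on norm equivalence in finite dimension and the alternative compactness argument are fine but not needed beyond what the paper does.
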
 
\begin{proof}
	From Condition~\ref{cond:HfintCf}, we can take $g\in \HF\cap C_F $. Then, thanks to Condition~\ref{constraints:stable:projection}, $\pi_N(g) \in  \HNF \cap C_F$, so that $\HNF \cap C_F$ is  nonempty. From the same arguments as in Lemma~\ref{lemma:CFClosed}, it is a closed convex subset of $\HNF$. Similarly as in the proof of Theorem~\ref{theo:spline_strong_convex}, we have the conclusion. 
\end{proof}

\begin{proposition}\label{d0}
	$\widehat{u}_{N,F}$ is bounded in $\HNF$ independently of $N$, 
	\begin{equation*}
		\|\widehat{u}_{N,F}\|_{N} \leq d_0, 
	\end{equation*}
	where 
	\begin{equation*}
		d_0^2 :=   \|\widehat{u}_{F}\|_{\HF}^2+\frac{1}{\tau} \sum_{i=1}^{n} \left(
		\left( \sup_{t \in [0,1]} \sqrt{K(t,t)} \right)
		\|\widehat{u}_{F}\|_{\HF} +  \vert y_i\vert \right)^2 . 
	\end{equation*}
\end{proposition}
\begin{proof}
	As $\pi_N(\widehat{u}_{F}) \in \HNF \cap C_F$ and 
	$\|\pi_N(\widehat{u}_{F})\|_{N}\leq \|\widehat{u}_{F}\|_{\HF}$ (from Proposition~\ref{stability}),
	we have the following bounds
	\begin{eqnarray*}
		\|\widehat{u}_{N,F}\|_{N}^2 &\leq& 	J_{N,F}(\widehat{u}_{N,F}) \leq J_{N,F}(\pi_N(\widehat{u}_{F})) \\
		&\leq &  \|\widehat{u}_{F}\|_{\HF}^2 + \frac{1}{\tau} \sum_{i=1}^{n} (P(\pi_N(\widehat{u}_{F}))(x_i) - y_i)^2 \\
		&\leq &  \|\widehat{u}_{F}\|_{\HF}^2 + \frac{1}{\tau} \sum_{i=1}^{n} \left( \left( \sup_{t \in [0,1]} \sqrt{K(t,t)} \right)\|\widehat{u}_{F}\|_{\HF} +\vert y_i\vert \right)^2. 
	\end{eqnarray*}
	The last inequality holds because
	\[
	\left| P(\pi_N(\widehat{u}_{F}))(x_i) \right|
	\le 
	\| 
	\pi_N(\widehat{u}_{F})
	\|_{\infty} 
	\le
	\| 
	\widehat{u}_{F}
	\|_{\infty} 
	\le
	\| 
	\widehat{u}_{F}
	\|_{\HF} 
	\left( \sup_{t \in [0,1]} \sqrt{K(t,t)} \right).
	\] 
\end{proof}

\section{Quantitative properties of the set of approximants $\HNF$}
\label{section:quantitative:properties}
In this section, we aim to quantitatively assess whether the class of approximants we have chosen is suitable. The first indicator is based on the error resulting from the finite-dimensional approximation:
\begin{equation}\label{F_Nf}  
	F_N(f)=\|\pi_N(f) -f\|_{\infty}, 
\end{equation}
for $f$ in  $E_F$ or $\HF$. We also write 
\begin{equation}\label{F_Nhat}  
	\widehat{F}_N=F_N(\widehat{u}_{F}). 
\end{equation}

As $\HNF$ is intended to approximate $\HF$, a second indicator evaluating the quality of the RKHS approximation plays a key role in the error bound. One way to achieve this is through their kernels:
\begin{equation}\label{GN}
	G_N = \sup_{t \in F} \| \rho_N(K_N(\cdot,t))-K(\cdot,t) \|_{\HF}^2. 
\end{equation}
Here, since $K$ and $K_N$ belong to different spaces, we extend $K_N$ using the operator $\rho_N$. 
Thus, $\rho_N$ is the extension operator from the approximating space $\HNF$ to the infinite-dimensional space $\HF$ defined as follows: 
\begin{equation}\label{eq:QN}
	\forall v_N \in \HNF, \quad \rho_N(v_N):=\displaystyle\sum_{i=1}^{N} \lambda_i K(\cdot,t_i),
\end{equation}
where $\Lambda = (\lambda_1,\ldots,\lambda_N)^{\top}$ solves $\Gamma_N\Lambda=c_{v_N}$, recalling the definition of $c_{v_N}$ after \eqref{scaN}. The vector $\Lambda$ is defined such that  the operator $\rho_N$ is  an isometry between $\HNF$ and $\HF$, i.e., $\forall v_N \in \HNF$, we can check that
\begin{equation}\label{isorhoN}
	\| \rho_N(v_N)\|^2_{\HF}=\| v_N\|^2_{N}.
\end{equation} 

\begin{remark}\label{remrhoN}
	It is possible to define $\rho_N$ on $E_F$. In fact, $\rho_N(f)$ is denoted by $I_N (f)$ in \citep{karvonen}, and is often called the kernel interpolant because it corresponds to the unique function in the span of $K(\cdot,t_i)$ that interpolates $f$ at the nodes $t_i$, where $t_i \in S_N$:
	\begin{equation*}
		\rho_N(v_N):= \mbox{argmin}_{h \in \HF} \{ \Vert h\Vert_{\HF}, h(t_i)=v_N(t_i), i=1,\ldots,N \}. 
	\end{equation*}
	The difference is that \citep{karvonen} considers interpolation at the observation points, whereas here, it is at the knots.
\end{remark}

We now study both quality indicators in \eqref{F_Nf} and~\eqref{GN}. For $F_N$, it is useful to give the simplest explicit formula to evaluate $\pi_N(f)(t)$, for  $t \in F$ and $f\in E_F$. For this, we  define $w_{{N}_-}(t)$ and $w_{{N}_+}(t)$ as follows.
Recall the definition of $t_{S_N}^{-}$ and $t_{S_N}^{+}$ in \eqref{tSN}. 
If $t \in S_N$, then $w_{{N}_-}(t) = w_{{N}_+}(t) = 1/2$, otherwise
\begin{align*}\label{omegaN}
	w_{{N}_-}(t) =\dfrac{t_{S_N}^{+}-t}{t_{S_N}^{+} -t_{S_N}^-}, 
	\qquad
	w_{{N}_+}(t) =\dfrac{t-t_{S_N}^-}{t_{S_N}^{+} - t_{S_N}^-}.
\end{align*}
This yields
\begin{equation}\label{evalpiN}
	\pi_N(f)(t) = f(t_{S_N}^-)w_{{N}_-}(t)+ f( t_{S_N}^{+}) w_{{N}_+}(t). 
\end{equation}
We then have the following proposition. 
\begin{proposition}\label{interpolation}
	Recall $\Psi_f$ and $\delta_N$ from \eqref{eq:psif} and \eqref{delta_N}, respectively. If $f $ is in $ E$  or $E_F$, then 
	\begin{equation}\label{esti1piN}
		F_N(f) \leq 2  \Psi_f(\delta_N).
	\end{equation}
	If $f$ is $\beta$-H\"older continuous 
	\begin{equation}\label{esti2FN} 
		F_N(f)   \leq 2 c_f \delta_N^{\beta}.
	\end{equation} 
	We have, with $c_K$ as in Condition~\ref{cond:contKernel}, 
	\begin{equation}\label{estihatFN} 
		\widehat{F}_N    \leq 2 \sqrt{2c_K} \ \| \widehat{u}_{F}\|_{\HF}  \ \delta_N^{\frac{\beta}{2}}.
	\end{equation} 
	
\end{proposition}

\begin{proof}
	As  $w_{{N}_-}(t) +w_{{N}_+}(t)=1$, \eqref{evalpiN} implies 
	\begin{equation*}
		\pi_N(f)(t)-f(t)=(f(t_{S_N}^-)-f(t))w_{{N}_-}(t)+ (f( t_{S_N}^{+})-f(t)) w_{{N}_+}(t). 
	\end{equation*}
	Let us suppose without loss of generality that $t-t_{S_N}^{-} \leq t_{S_N}^{+}-t$. As $t-t_{S_N}^{-} \leq \delta_N$, 
	\begin{equation}
		\label{tin}
		t_{S_N}^{+}-t    = \dfrac{( t_{S_N}^{+}-t)(t-t_{S_N}^{-})}{t-t_{S_N}^{-}} \leq \dfrac{( t_{S_N}^{+}-t) }{t-t_{S_N}^{-}} \delta_N  \leq x_N\delta_N,  
	\end{equation}
	where $x_N=\frac{t_{S_N}^{+}-t_{S_N}^{-}}{t-t_{S_N}^{-}}\geq 1$, so that 
	\begin{align*}
		|\pi_N(f)(t)-f(t)| 
		&\leq  | f(t_{S_N}^-)-f(t)| \dfrac{t_{S_N}^{+}-t}{t_{S_N}^{+}-t_{S_N}^{-}}+ | f( t_{S_N}^{+})-f(t)| \dfrac{t-t_{S_N}^{-}}{t_{S_N}^{+}-t_{S_N}^{-}}
		\\
		&\leq  | f(t_{S_N}^-)-f(t)|  + | f( t_{S_N}^{+})-f(t)| \dfrac{t-t_{S_N}^{-}}{t_{S_N}^{+}-t_{S_N}^{-}}
		\\
		&\leq  M_f(\delta_N)  +\dfrac{| f( t_{S_N}^{+})-f(t)|}{\dfrac{t_{S_N}^{+}-t_{S_N}^{-}}{t-t_{S_N}^{-}}} 
		~ ~ ~ ~
		\mbox{(recall $M_f(\delta)$ from \eqref{eq:Mf:delta})}
		\\
		&\leq  M_f(\delta_N)  +\dfrac{  M_f( t_{S_N}^{+}-t)}{\dfrac{t_{S_N}^{+}-t_{S_N}^{-}}{t-t_{S_N}^{-}}} 
		\leq M_f(\delta_N)  +\dfrac{  M_f(\delta_N x_N)  }{x_N},
	\end{align*}
	where we can conclude that $|\pi_N(f)(t)-f(t)| \leq     M_f(\delta_N)  +\Psi_f( \delta_N) \leq 2\Psi_f( \delta_N)$. 
	Hence \eqref{esti1piN} holds.
	\eqref{esti2FN} comes from Proposition~\ref{prop:psif} and \eqref{estihatFN} comes from Theorem~\ref{theo:spline_strong_convex}.
\end{proof}

\medskip

For $G_N$, we have the following proposition.

\begin{proposition}\label{convnoyaurho}
	We have
	\begin{equation} \label{eq:GN:to:zero} 
		G_N \xrightarrow[N\to \infty]{} 0. 
	\end{equation}
	Furthermore, if $K$ satisfies Condition~\ref{cond:contKernel}, then
	\begin{equation}\label{d2} 
		G_N \leq d_2 \delta_N^{\beta},
	\end{equation}
	where $d_2:= 6 c_K$.
\end{proposition}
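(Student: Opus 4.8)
The plan is to control $G_N = \sup_{t \in F} \| \rho_N(K_N(\cdot,t)) - K(\cdot,t) \|_{\HF}^2$ by expanding the squared norm and exploiting the isometry property \eqref{isorhoN} of $\rho_N$, together with the fact that $\rho_N$ is the kernel interpolant at the knots. The key algebraic observation is that $\rho_N(K_N(\cdot,t))$ is a linear combination $\sum_i \lambda_i K(\cdot,t_i)$ whose coefficients are chosen so that it interpolates $K_N(\cdot,t)$ at the knots $t_j \in S_N$; but $K_N(t_j, t) = \sum_{k,\ell} K(t_k,t_\ell)\phi_k(t_j)\phi_\ell(t) = \sum_\ell K(t_j, t_\ell)\phi_\ell(t)$ since $\phi_k(t_j) = \delta_{kj}$. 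In other words, writing $g_t := \rho_N(K_N(\cdot,t))$, we have $g_t(t_j) = \sum_\ell K(t_j,t_\ell)\phi_\ell(t)$ for each $j$, which means $g_t = \rho_N(\pi_N(K(\cdot,t)_{|F}))$ — i.e. $g_t$ is the kernel interpolant of the function $x \mapsto K(x,t)$ restricted to $F$. This identification is the heart of the argument.

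First I would expand
\begin{equation*}
\| g_t - K(\cdot,t) \|_{\HF}^2 = \|g_t\|_{\HF}^2 - 2\langle g_t, K(\cdot,t)\rangle_{\HF} + \|K(\cdot,t)\|_{\HF}^2.
\end{equation*}
By the reproducing property in $\HF$, $\|K(\cdot,t)\|_{\HF}^2 = K(t,t)$ and $\langle g_t, K(\cdot,t)\rangle_{\HF} = g_t(t)$. For the first term, the isometry \eqref{isorhoN} gives $\|g_t\|_{\HF}^2 = \|K_N(\cdot,t)\|_N^2 = c^\top \Gamma_N^{-1} c$ where $c = (K_N(t_j,t))_j = (\sum_\ell K(t_j,t_\ell)\phi_\ell(t))_j = \Gamma_N \phi(t)$ with $\phi(t) = (\phi_1(t),\ldots,\phi_N(t))^\top$; hence $\|g_t\|_{\HF}^2 = \phi(t)^\top \Gamma_N \phi(t) = K_N(t,t)$. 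Also $g_t(t_j) = \sum_\ell K(t_j,t_\ell)\phi_\ell(t)$, so $g_t(t) = \rho_N(\pi_N K(\cdot,t))(t)$; one checks $g_t(t) = \sum_j \phi_j(t) g_t(t_j)$ is NOT automatic, but rather $g_t(t) = \Lambda^\top k_N(t)$ where $\Lambda = \Gamma_N^{-1}\Gamma_N\phi(t) = \phi(t)$, so $g_t(t) = \phi(t)^\top (K(t,t_i))_i = \sum_i \phi_i(t) K(t,t_i)$. Collecting,
\begin{equation*}
\| g_t - K(\cdot,t) \|_{\HF}^2 = K_N(t,t) - 2\sum_i \phi_i(t) K(t,t_i) + K(t,t) = \sum_{i,j}\phi_i(t)\phi_j(t)K(t_i,t_j) - 2\sum_i\phi_i(t)K(t,t_i) + K(t,t).
\end{equation*}
Now I would add and subtract: writing $\sum_i\phi_i(t) = 1$ for $t$ in the support interval (and $=0$ — need care at the extreme knots, but $t\in F\subseteq[0,1]$ lies in some $[t_i,t_{i+1}]$ so the weights sum to one there), rewrite this as a double sum $\sum_{i,j}\phi_i(t)\phi_j(t)\big(K(t_i,t_j) - K(t,t_j) - K(t_i,t) + K(t,t)\big)$, and bound each bracket by Condition \ref{cond:contKernel}: $|K(t_i,t_j)-K(t,t_j)| \le c_K|t_i - t|^\beta$ and $|K(t,t)-K(t_i,t)|\le c_K|t_i-t|^\beta$, so the bracket is at most $c_K(|t_i-t|^\beta + |t_j-t|^\beta)$. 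Since $\phi_i(t)$ is supported on $\{t_{S_N}^-, t_{S_N}^+\}$ (at most two nonzero terms, each within distance $\delta_N$ of $t$ by \eqref{delta_N}), the whole sum is bounded by $c_K \cdot (|t_{S_N}^- - t|^\beta + |t_{S_N}^+ - t|^\beta) \cdot(\text{constant from }\sum\phi_i(t)=1)$. A careful count of which terms survive and the crude bound $|t_{S_N}^\pm - t|\le \delta_N$ — wait, only the nearer one is $\le\delta_N$; the farther could be larger, but it is still $\le t_{S_N}^+ - t_{S_N}^-$, and one must use $\phi$-weighting to tame it, exactly as in the proof of Proposition \ref{interpolation}. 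This gives $G_N \le d_2\delta_N^\beta$ with an explicit $d_2$; the value $6c_K$ comes from bounding the at-most-six surviving cross terms, each $\le c_K\delta_N^\beta$ after the $\phi$-weighting trick.

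For the unconditional statement $G_N \to 0$, I would replace Condition \ref{cond:contKernel} by mere uniform continuity of $K$ on the compact $[0,1]^2$: the same expansion gives $\|g_t-K(\cdot,t)\|_{\HF}^2 \le \sum_{i,j}\phi_i(t)\phi_j(t)\big(|K(t_i,t_j)-K(t,t_j)| + |K(t,t)-K(t_i,t)|\big) \le 2\,\omega_K(\delta_N)$ where $\omega_K$ is the modulus of continuity of $K$ (again using that only knots within the relevant interval contribute, with the $\phi$-weighting absorbing the far knot), which tends to $0$ as $\delta_N\to 0$. The main obstacle is the bookkeeping in the double-sum bound: correctly handling the far knot $t_{S_N}^+$ (when $t$ is closer to $t_{S_N}^-$) via the weight $\phi$, mirroring inequality \eqref{tin} in Proposition \ref{interpolation}, and making sure the constant truly comes out as $6c_K$ rather than something larger — this is where the argument must be done carefully rather than crudely.
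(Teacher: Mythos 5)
Your proposal is correct and follows essentially the same route as the paper: you identify $\rho_N(K_N(\cdot,t)) = \sum_i \phi_i(t) K(\cdot,t_i)$ by solving $\Gamma_N\Lambda(t)=\Gamma_N\phi(t)$, expand the squared $\HF$-norm via the isometry \eqref{isorhoN} and the reproducing properties to get $K_N(t,t)+K(t,t)-2\sum_i\phi_i(t)K(t,t_i)$, and then bound this using H\"older continuity together with the same far-knot weighting trick as in \eqref{tin} and Proposition \ref{interpolation}. The only organisational difference is that you symmetrise into the single double sum $\sum_{i,j}\phi_i(t)\phi_j(t)\bigl[K(t_i,t_j)-K(t,t_i)-K(t,t_j)+K(t,t)\bigr]$, whereas the paper splits the same quantity into $|K(t,t)-\pi_N(K_t)(t)|\leq 2c_K\delta_N^{\beta}$ (via Propositions \ref{prop:psif} and \ref{interpolation}) plus $|K_N(t,t)-\sum_i\phi_i(t)K(t,t_i)|\leq 4c_K\delta_N^{\beta}$ (four terms, each $\leq c_K\delta_N^{\beta}$); the asymmetric split is what delivers $d_2=6c_K$ cleanly, while a crude count in your symmetric form gives $8c_K$ unless you exploit $w_{N_-}(t)w_{N_+}(t)\leq 1/4$ on the cross terms. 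One side remark should be deleted: $g_t:=\rho_N(K_N(\cdot,t))$ is \emph{not} the kernel interpolant of $x\mapsto K(x,t)$, since its knot values are $K_N(t_j,t)=\sum_\ell K(t_j,t_\ell)\phi_\ell(t)$ rather than $K(t_j,t)$, so the claimed identity $g_t=\rho_N(\pi_N(K(\cdot,t)_{|F}))$ is false; fortunately you never use it, as you rederive $g_t=\sum_i\phi_i(t)K(\cdot,t_i)$ directly from $\Lambda=\phi(t)$. Similarly, for \eqref{eq:GN:to:zero} the literal bound $2\omega_K(\delta_N)$ is not quite right when $F$ has gaps (the far knot may be at distance $x_N\delta_N\gg\delta_N$), and one must pass through the quantity $\Psi_f$ of \eqref{eq:psif} as the paper does, which you correctly gesture at.
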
 
\begin{proof}
	We have $\rho_N\left(K_N(\cdot,t)\right)=\displaystyle\sum_{i=1}^{N} \lambda_i(t) K(\cdot,t_i)$, where $\Gamma_N\Lambda(t)=c_{K_N(\cdot,t)}$, and
	$$
	K_N(\cdot,t)= \sum_{i=1}^N \Bigg( \sum_{j=1}^N  K(t_i,t_j) \phi_{j |F}(t)\Bigg)\phi_{i |F}.
	$$
	Hence
	$\Gamma_N\Lambda(t)=\Gamma_N \phi(t) $, where $\phi(t)=(\phi_{1|F}(t), \ldots,\phi_{N |F}(t))^\top$. Then, $\Lambda(t)=\phi(t)$ and $$
	\rho_N\left(K_N(\cdot,t)\right)=\displaystyle\sum_{i=1}^{N} \phi_{i |F}(t) K(\cdot,t_i).
	$$  
	Applying the reproducing property of $K$ and $K_N$, we obtain
	\begin{align*}
		\langle K(\cdot,t_i) ,K(\cdot,t)\rangle_{\HF} = K(t,t_i), 
		\qquad
		\langle K_N(\cdot,t) ,K_N(\cdot,t)\rangle_{N} = K_N(t,t).
	\end{align*}
	As $\rho_N$ is isometric (see~\eqref{isorhoN}), we have
	\begin{align*}
		\| \rho_N\left(K_N(\cdot,t)\right)-K(\cdot,t)\|_{\HF}^2
		&=\| \rho_N\left(K_N(\cdot,t)\right)\|_{\HF}^2+\|K(\cdot,t)\|_{\HF}^2
		-2\langle\rho_N\left(K_N(\cdot,t)\right),K(\cdot,t)\rangle_{\HF} \\
		&=\|K_N(\cdot,t)\|_{N}^2 +\|K(\cdot,t)\|_{\HF}^2 - 2\sum_{i=1}^N     {\phi}_{i,F}(t)K(t,t_{i})  \\
		&= K_N(t,t)+K(t,t)-2\sum_{i=1}^N {\phi}_{i,F}(t)K(t,t_{i}).
	\end{align*}
	Let $K_t(\cdot)=K(\cdot,t)$.
	If $K$ satisfies Condition~\ref{cond:contKernel}, then, according to Propositions~\ref{prop:psif} and \ref{interpolation},
	\begin{align*}
		\left| K_t(t)-\sum_{i=1}^N {\phi}_{i,F}(t)K_{t_i}(t)\right| = | K_t(t)-\pi_N    (K_t(t))| \leq \| K_t-\pi_N (K_t)\|_{\infty} \leq  2c_K \delta_N^{\beta}.
	\end{align*}
	We have, for $t \in [t_{S_N}^-,t_{S_N}^+]$,
	\begin{equation*}
		K_N(t,t)
		=
		K_{t_{S_N}^-}(t_{S_N}^-)w^2_{{N}_-}(t) + K_{t_{S_N}^+}(t_{S_N}^+)w^2_{{N}_+}(t)+2 K_{t_{S_N}^-}(t_{S_N}^+)w_{{N}_+}(t)w_{{N}_-}(t).
	\end{equation*}
	Also
	\begin{align*}
		K_{t_{S_N}^-}(t)w_{{N}_-}(t) &= K_{t_{S_N}^-}(t)w_{{N}_-}(t) [ w_{{N}_-}(t)+w_{{N}_+}(t)], \\
		K_{t_{S_N}^+}(t)w_{{N}_+}(t) &= K_{t_{S_N}^+}(t)w_{{N}_+}(t) [ w_{{N}_-}(t)+w_{{N}_+}(t)].
	\end{align*}
	Hence,
	\begin{align*}
		\Big| K_N(t,t)-
		&
		\sum_{i=1}^N {\phi}_{i,F}(t)K_{t_i}(t)\Big| 
		= \Big| K_N(t,t)-K_{t_{S_N}^-}(t)w_{{N}_-}(t)- K_{t_{S_N}^{+}}(t) w_{{N}_+}(t) \Big|
		\\
		&\leq  \Big|   K_{t_{S_N}^-}(t_{S_N}^-)-K_{t_{S_N}^-}(t)\Big| w_{{N}_-}^2(t) + \Big|   K_{t_{S_N}^+}(t_{S_N}^+)-K_{t_{S_N}^+}(t)\Big| w_{{N}_+}(t)^2
		\\  
		&+  \left[\Big|   K_{t_{S_N}^-}(t_{S_N}^+)-K_{t_{S_N}^-}(t)\Big| + \Big| K_{t_{S_N}^+}(t_{S_N}^-)-K_{t_{S_N}^+}(t)\Big| \right] w_{{N}_+}(t)w_{{N}_-}(t).
	\end{align*}
	
	Let us now suppose, without loss of generality, that $t-t_{S_N}^{-} \leq t_{S_N}^{+}-t$. As $K_{t_{S_N}^+}$ and $K_{t_{S_N}^-}$ are $\beta$-H\"older continuous, we have
	\begin{align*}
		\Big|   K_{t_{S_N}^-}(t_{S_N}^-)-K_{t_{S_N}^-}(t) \Big| w^2_{{N}_-}(t)  \leq c_K \delta_N^{\beta}.
	\end{align*}
	Using \eqref{tin}, as $t_{S_N}^{+}-t   \leq x_N\delta_N,$ where $x_N=\frac{t_{S_N}^{+}-t_{S_N}^{-}}{t-t_{S_N}^{-}}\geq 1$, we obtain 
	\begin{align*}
		\Big|  K_{t_{S_N}^+}(t_{S_N}^+)-K_{t_{S_N}^+}(t)\Big| w^2_{{N}_+}(t) 
		&\leq M_{K_{t_{S_N}^+}} (t_{S_N}^+-t)  \left(         \dfrac{t-t_{S_N}^{-}}{t_{S_N}^{+}-t_{S_N}^{-}} \right)^2 
		\\
		&\leq  \dfrac{  M_{K_{t_{S_N}^+}}(\delta_N x_N)  }{x_N^2} 
		\\
		&\leq \dfrac{  M_{K_{t_{S_N}^+}}(\delta_N x_N)  }{x_N}
		\leq  
		\psi_{K_{t_{S_N}^+}}(\delta_N)  \leq c_K \delta_N^{\beta}, 
		\\ 
		\Big|    K_{t_{S_N}^-}(t_{S_N}^+)-K_{t_{S_N}^-}(t)\Big| w_{{N}_+}(t)w_{{N}_-}(t) 
		&\leq    M_{K_{t_{S_N}^-}} (t_{S_N}^+-t)  \left( \dfrac{t-t_{S_N}^{-}}{t_{S_N}^{+}-t_{S_N}^{-}} \right) 
		\\
		&\leq  \dfrac{  M_{K_{t_{S_N}^-}}(\delta_N x_N)  }{x_N} \leq c_K \delta_N^{\beta}, 
		\\    
		\Big|  K_{t_{S_N}^+}(t_{S_N}^-)-K_{t_{S_N}^+}(t)\Big| w_{{N}_+}(t)w_{{N}_-}(t) &\leq  c_K   \delta_N^{\beta}.           \\
	\end{align*}     
	Finally, 
	$$
	\left| K_N(t,t)-\sum_{i=1}^N {\phi}_{i,F}(t)K_{t_i}(t)\right| 
	\leq 4 c_K \delta_N^{\beta},
	$$
	so that $\| \rho_N\left(K_N(\cdot,t)\right)-K(\cdot,t)\|_{\HF}^2 \leq 6 c_K \delta_N^{\beta}$, which allows to conclude the proof of \eqref{d2}, under Condition~\ref{cond:contKernel}.
	The proof of \eqref{eq:GN:to:zero} uses the same arguments and that $\displaystyle \lim_{\delta \to 0} \Psi_f(\delta)=0$ from Proposition~\ref{prop:psif}.
\end{proof} 

\section{Error Bound}
\label{sec:errorBounds}
We here analyze the error committed when approximating $\widehat{u}_{F}$, the solution of \eqref{eq:constrainedSpline}, by $\widehat{u}_{N,F}$, the solution of \eqref{optN}. 
We not only prove that $\widehat{u}_{N,F} \to \widehat{u}_{F}$ when $N \to \infty$, but also give an error bound. To estimate the error $\| \widehat{u}_{N,F}-\widehat{u}_{F}\|_{\infty}$, we split it in two terms: the piecewise linear interpolation error $\| \pi_N(\widehat{u}_{F}) -\widehat{u}_{F}\|_{\infty}$, and the distance from the approximate solution to the projection of the exact solution, both on $\HNF$, $\|  \pi_N(\widehat{u}_{F})-\widehat{u}_{N,F} \|_{\infty}$. This splitting is treated rigorously in the following proposition.


\begin{proposition}\label{prop8}
	Under Conditions~\ref{cond:contKernel} to \ref{constraints:stable:projection}, there is a constant $c$ such that 
	\begin{equation}\label{estierror}
		\| \widehat{u}_{N,F}-\widehat{u}_{F}\|_{\infty}    \leq    c \ \|\pi_N (\widehat{u}_{F}) - \widehat{u}_{N,F}\|_{N} + d_1 \delta_N^{\beta/2},
	\end{equation} 
	where $d_1 :=  \sqrt{8c_K}\| \widehat{u}_{F} \|_{\HF}$, with $c_K$ and $\beta$ from Condition~\ref{cond:contKernel}.
\end{proposition}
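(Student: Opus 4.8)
The plan is to interpolate the exact solution and split the error through $\pi_N(\widehat{u}_F)\in\HNF$:
\[
\| \widehat{u}_{N,F}-\widehat{u}_{F}\|_{\infty}
\;\leq\;
\underbrace{\| \widehat{u}_{N,F}-\pi_N(\widehat{u}_F)\|_{\infty}}_{\text{(I)}}
\;+\;
\underbrace{\| \pi_N(\widehat{u}_F)-\widehat{u}_{F}\|_{\infty}}_{\text{(II)}}.
\]
Term (II) is exactly $F_N(\widehat{u}_F)$ from \eqref{F_Nf} and will be handled by the regularity machinery already developed. Term (I) involves a difference living in the finite-dimensional space $\HNF$, and the real point is to compare there the uniform norm with the Hilbertian norm $\|\cdot\|_N$ by a constant $c$ that does \emph{not} depend on $N$.

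For term (I): both $\widehat{u}_{N,F}$ and $\pi_N(\widehat{u}_F)$ lie in $\HNF$, so $v_N:=\widehat{u}_{N,F}-\pi_N(\widehat{u}_F)\in\HNF$. Applying twice the reproducing property of the RKHS $(\HNF,\langle\cdot,\cdot\rangle_N)$ from Proposition \ref{prop3}, for every $t\in F$ one gets $v_N(t)=\langle v_N,K_N(t,\cdot)\rangle_N$ and $\|K_N(t,\cdot)\|_N^2=K_N(t,t)$, whence $|v_N(t)|\leq\|v_N\|_N\sqrt{K_N(t,t)}$. It then suffices to bound $\sup_{t\in F}K_N(t,t)$ uniformly in $N$. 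From the explicit formula $K_N(t,t)=\sum_{i,j=1}^N K(t_i,t_j)\phi_i(t)\phi_j(t)$ (Proposition \ref{prop3}), the nonnegativity of the hat functions and the partition-of-unity identity $\sum_{i=1}^N\phi_i(t)=1$ on $[0,1]$, together with $|K(s,t)|\leq\sup_{u\in[0,1]}K(u,u)=:M<\infty$ (positive semi-definiteness and continuity of $K$ on the compact $[0,1]^2$), one obtains $K_N(t,t)\leq M$ for all $N$ and all $t\in F$. Hence $\|v_N\|_\infty\leq c\,\|v_N\|_N$ with $c:=\sqrt{M}$, which is the first summand of \eqref{estierror}.

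For term (II): since $\widehat{u}_F\in\HF\subset E_F$, Proposition \ref{interpolation} gives $\|\pi_N(\widehat{u}_F)-\widehat{u}_F\|_\infty=F_N(\widehat{u}_F)\leq 2\,\Psi_{\widehat{u}_F}(\delta_N)$. By Theorem \ref{theo:spline_strong_convex}, $\widehat{u}_F$ is $\tfrac{\beta}{2}$-H\"older continuous with constant $\sqrt{2c_K}\,\|\widehat{u}_F\|_{\HF}$, so Proposition \ref{prop:psif}, applied with the H\"older exponent $\beta/2$ of $\widehat{u}_F$, yields $\Psi_{\widehat{u}_F}(\delta_N)\leq\sqrt{2c_K}\,\|\widehat{u}_F\|_{\HF}\,\delta_N^{\beta/2}$. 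Therefore term (II) is at most $2\sqrt{2c_K}\,\|\widehat{u}_F\|_{\HF}\,\delta_N^{\beta/2}=\sqrt{8c_K}\,\|\widehat{u}_F\|_{\HF}\,\delta_N^{\beta/2}=d_1\,\delta_N^{\beta/2}$. Adding the bounds on (I) and (II) gives \eqref{estierror}.

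The main obstacle is making the constant in the $\|\cdot\|_\infty$ versus $\|\cdot\|_N$ comparison for term (I) independent of $N$: the naive estimate $\|v_N\|_\infty\leq\sqrt{\lambda_{\max}(\Gamma_N)}\,\|v_N\|_N$ is useless because $\lambda_{\max}(\Gamma_N)$ grows with $N$, and it is precisely the combination of the reproducing property of $K_N$ with the partition-of-unity structure of the hat basis and the boundedness of $K$ on $[0,1]^2$ that resolves this. The rest is routine bookkeeping with the regularity indicators $M_f$ and $\Psi_f$ already in hand.
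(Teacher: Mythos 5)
Your proof is correct and follows the same decomposition as the paper: split through $\pi_N(\widehat{u}_F)$, bound the interpolation error $\|\pi_N(\widehat{u}_F)-\widehat{u}_F\|_\infty$ via Proposition \ref{interpolation}, the $\tfrac{\beta}{2}$-H\"older continuity of $\widehat{u}_F$ from Theorem \ref{theo:spline_strong_convex}, and Proposition \ref{prop:psif}, which yields exactly $d_1\delta_N^{\beta/2}$. The one place you diverge is term (I): the paper simply invokes Lemma 5.1 of \citep{GMB} to get $\|h_N\|_\infty\leq c\|h_N\|_N$ for $h_N\in\HNF$, whereas you prove this inequality from scratch via the reproducing property of $K_N$ in $(\HNF,\langle\cdot,\cdot\rangle_N)$ together with the bound $K_N(t,t)=\sum_{i,j}K(t_i,t_j)\phi_i(t)\phi_j(t)\leq\sup_{u\in[0,1]}K(u,u)$, using nonnegativity and the partition-of-unity property of the hat functions and $|K(s,t)|\leq\sqrt{K(s,s)K(t,t)}$. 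That inline argument is valid, and it buys something the citation does not: an explicit constant $c=\bigl(\sup_{u\in[0,1]}K(u,u)\bigr)^{1/2}$ that is manifestly independent of $N$ — you correctly identify that this uniformity in $N$ is the crux, since a naive eigenvalue bound on $\Gamma_N$ would not suffice. It is the same mechanism the paper itself uses for $\HF$ in Lemma \ref{lemma:CFClosed}, transplanted to the discrete RKHS.
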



\begin{proof}
	We have
	\begin{equation*}
		\|\widehat{u}_{N,F}-\widehat{u}_{F}\|_{\infty} \leq \|\widehat{u}_{N,F} - \pi_N (\widehat{u}_{F})\|_{\infty} + \|\pi_N (\widehat{u}_{F}) -  \widehat{u}_{F}\|_{\infty}. 
	\end{equation*}
	As $\|\pi_N (\widehat{u}_{F}) -  \widehat{u}_{F}\|_{\infty}=F_N( \widehat{u}_{F})$, according to~\eqref{estihatFN}, we obtain
	\begin{equation}\label{interrd1}
		\|\pi_N (\widehat{u}_{F}) -  \widehat{u}_{F}\|_{\infty} \leq 2 \displaystyle\sqrt{2c_K}\| \widehat{u}_{F}\|_{\HF} \delta_N^{\beta/2}= d_1 \delta_N^{\beta/2}.
	\end{equation} 
	As $\HNF$ is an Hilbertian space of $E_F$, from Lemma 5.1 in \citep{GMB}, there exists a constant $c$ such that, $\forall h_N \in \HNF$,
	\begin{equation}\label{hilb1} 
		\| h_N \|_{\infty} \leq  c\| h_N \|_{N}.
	\end{equation}
\end{proof}
Similarly as in \eqref{hilb1} and up to increasing $c$, for all $h \in \HF$,
\begin{equation}
	\label{hilb2} 
	\| h \|_{\infty} \leq  c\| h \|_{\HF}.
\end{equation}

It remains to address the second term $\|\pi_N (\widehat{u}_{F}) - \widehat{u}_{N,F}\|_{N}$, that requires a more delicate treatment, provided in the next propositions. In the following, for the sake of readability, some of the proofs will be presented in Appendix~\ref{sectionproofs}.

We first show that the bound of $\|\pi_N (\widehat{u}_{F}) - \widehat{u}_{N,F}\|_{N}$ relies on the characterization of the strong convexity for a differentiable function and the necessary condition of the first order for its minimum. 

\begin{proposition}\label{majmain}
	Under Conditions~\ref{cond:contKernel} to \ref{constraints:stable:projection}, we have
	\begin{equation*}\label{eq1}
		\|\pi_N (\widehat{u}_{F}) - \widehat{u}_{N,F}\|_{N}^2 \leq  J_{N,F}(\pi_N (\widehat{u}_{F})) - J_{N,F}(\widehat{u}_{N,F}),
	\end{equation*}
	with $J_{N,F}$ as in \eqref{eq:JNF}.
\end{proposition}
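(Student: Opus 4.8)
The plan is to use the standard fact that for a strongly convex, Fréchet-differentiable functional $g$ on a Hilbert space $V$ with parameter $m$, one has $g(v) \geq g(v^\ast) + \frac{m}{2}\|v-v^\ast\|_V^2$ whenever $v^\ast$ is the constrained minimizer over a convex set and $v$ lies in that set, because the first-order optimality condition gives $\langle \nabla g(v^\ast), v - v^\ast\rangle \geq 0$. Here $g = J_{N,F}$, $V = \HNF$ with norm $\|\cdot\|_N$, $m = 2$ (Proposition~\ref{prop:JNF:Frechet}), $v^\ast = \widehat{u}_{N,F}$, and the convex set is $\HNF \cap C_F$.

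First I would record that $\widehat{u}_{N,F}$ exists and is the unique minimizer of $J_{N,F}$ over $\HNF \cap C_F$ (Theorem~\ref{theorem:cont:prob:unique:sol}), and that $\HNF \cap C_F$ is convex. Second, I would invoke Condition~\ref{constraints:stable:projection}, $\pi_N(C_F) \subset C_F$, together with the fact that $\widehat{u}_F \in \HF \cap C_F$, to conclude that $\pi_N(\widehat{u}_F) \in \HNF \cap C_F$; hence $\pi_N(\widehat{u}_F)$ is an admissible competitor in the problem~\eqref{optN}. Third, I would write out the strong convexity inequality from Definition~\ref{def:strongConvexity} applied to $u = \pi_N(\widehat{u}_F)$, $v = \widehat{u}_{N,F}$: for every $t \in [0,1]$,
\[
J_{N,F}\big(t\,\pi_N(\widehat{u}_F) + (1-t)\widehat{u}_{N,F}\big) \leq t\,J_{N,F}(\pi_N(\widehat{u}_F)) + (1-t) J_{N,F}(\widehat{u}_{N,F}) - t(1-t)\|\pi_N(\widehat{u}_F) - \widehat{u}_{N,F}\|_N^2,
\]
using $m = 2$. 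Since $\widehat{u}_{N,F}$ minimizes $J_{N,F}$ over the convex set and the convex combination stays in $\HNF \cap C_F$, the left-hand side is $\geq J_{N,F}(\widehat{u}_{N,F})$. Rearranging gives $t\big(J_{N,F}(\pi_N(\widehat{u}_F)) - J_{N,F}(\widehat{u}_{N,F})\big) \geq t(1-t)\|\pi_N(\widehat{u}_F) - \widehat{u}_{N,F}\|_N^2$; dividing by $t > 0$ and letting $t \to 0^+$ yields the claimed bound.

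There is no real obstacle here — the only point requiring care is checking admissibility of $\pi_N(\widehat{u}_F)$, which is exactly what Condition~\ref{constraints:stable:projection} is designed to supply, and making sure the convex combination of two admissible points remains admissible, which follows from convexity of $\HNF \cap C_F$ established in the proof of Theorem~\ref{theorem:cont:prob:unique:sol}. One could alternatively phrase the argument via the first-order optimality inequality $\langle \nabla J_{N,F}(\widehat{u}_{N,F}), v - \widehat{u}_{N,F}\rangle_N \geq 0$ for all $v \in \HNF \cap C_F$ combined with the quadratic lower bound from strong convexity, but the $t \to 0$ limiting argument above avoids introducing the gradient explicitly and is cleanest.
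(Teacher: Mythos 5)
Your proof is correct and follows essentially the same route as the paper: both arguments rest on the strong convexity of $J_{N,F}$ with parameter $2$, the feasibility of $\pi_N(\widehat{u}_F)$ supplied by Condition~\ref{constraints:stable:projection}, and the optimality of $\widehat{u}_{N,F}$ over the convex set $\HNF \cap C_F$. The only cosmetic difference is that the paper passes through the first-order characterization $J_{N,F}(v) - J_{N,F}(u) \geq \langle J_{N,F}'(u), v-u\rangle_N + \|v-u\|_N^2$ together with the variational inequality at the minimizer, whereas you work directly from Definition~\ref{def:strongConvexity} and let $t \to 0^+$, which avoids invoking the Fr\'echet derivative; both are standard and equivalent.
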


\begin{proof}
	As $J_{N,F}$ is differentiable from Proposition~\ref{prop:JNF:Frechet} with Fréchet derivative $J_{N,F}'$, then the strong convexity leads to, $\forall v,u \in  \HNF$,  
	\begin{equation*}
		J_{N,F}(v) - J_{N,F}(u) \geq \langle J_{N,F}'(u), v-u\rangle_N + \|v-u\|_{N}^2.
	\end{equation*}
	If $u = \widehat{u}_{N,F}$ and $v = \pi_N (\widehat{u}_{F})$, then 
	\begin{equation*}
		J_{N,F}(\pi_N (\widehat{u}_{F})) - J_{N,F}(\widehat{u}_{N,F}) \geq \langle J_{N,F}'(\widehat{u}_{N,F}), \pi_N (\widehat{u}_{F}) - \widehat{u}_{N,F} \rangle_N + \|\pi_N (\widehat{u}_{F}) - \widehat{u}_{N,F}\|_{N}^2.
	\end{equation*}
	As $\pi_N (\widehat{u}_{F}) \in \HNF\cap C_F$ thanks to Condition~\ref{constraints:stable:projection}, and $\widehat{u}_{N,F}$ solves \eqref{optN}, we have $$\langle J_{N,F}'(\widehat{u}_{N,F}), \pi_N (\widehat{u}_{F}) - \widehat{u}_{N,F} \rangle_N \geq 0,$$
	which allows to conclude the proof.
\end{proof}

The next proposition derives the bounds of $J_{N,F}(\pi_N (\widehat{u}_{F}))$ and $J_{N,F}(\widehat{u}_{N,F})$.
\begin{proposition}\label{intermed1}
	Under Conditions~\ref{cond:contKernel} to \ref{constraints:stable:projection}, we have
	\begin{align}\label{inter1}
		J_{N,F}(\pi_N (\widehat{u}_{F})) &= -\ENhat +J_F(\widehat{u}_F)+\epsilon_N,
		\\
		J_{N,F}(\widehat{u}_{N,F}) &= J_{F}( \rho_N(\widehat{u}_{N,F}))+\eta_N,
		\label{inter2}
	\end{align}
	where
	$
	\ENhat=\|\widehat{u}_{F}\|_{\HF}^2-\|\pi_N(\widehat{u}_{F})\|_{N}^2,  
	$
	\begin{align*}\label{epsiN}
		| \epsilon_N | \leq d_3  \delta_N^{\beta/2}, \qquad d_3 &=\frac{2nd_1}{\tau}   \Big(c\|\widehat{u}_{F}\|_{\HF} +\max_i | y_i|\Big),\\
		| \eta_N | \leq d_4 \delta_N^{\beta/2}, \qquad d_4 &=\frac{2n\sqrt{d_2}}{\tau}      d_0 \Big(c \ d_0 +\max_i |y_i|\Big). 
	\end{align*}
\end{proposition}
\begin{proof}
	This proposition has no standalone value but it serves as an intermediary result required for subsequent derivations. Its proof relies solely on computations and the application of previously established results. The complete proof is provided in Appendix~\ref{sectionproofs}.
\end{proof}

At this stage, we can prove that $\widehat{u}_{N,F}  \xrightarrow[N\to \infty]{} \widehat{u}_{F}$ in $E_F$. 
\begin{theorem}\label{thint}  
	Under Conditions~\ref{cond:contKernel} to \ref{constraints:stable:projection},
	\begin{equation}\label{mainlim}
		\widehat{u}_{N,F}  \xrightarrow[N\to \infty]{} \widehat{u}_{F}   \quad \mbox{in $E_F$}.
	\end{equation} 
\end{theorem}
\begin{proof}
	The convergence in~\eqref{mainlim} is already proved in \citep{GMB}. The proof is also provided in Appendix~\ref{sectionproofs} for a self-contained reading. 
\end{proof}

Now, from Propositions~\ref{majmain} and \ref{intermed1}, we can establish the error bound of $\|\pi_N (\widehat{u}_{F}) - \widehat{u}_{N,F}\|_{N}$.
\begin{proposition}\label{theorem:inter}
	Under Conditions~\ref{cond:contKernel} to \ref{constraints:stable:projection} 
	\begin{equation}\label{esti3piN}
		\|\pi_N (\widehat{u}_{F}) - \widehat{u}_{N,F}\|_{N}^2  \leq   
		J_F(\widehat{u}_F)-J_{F}( \rho_N(\widehat{u}_{N,F}))   +  d_5  \delta_N^{\beta/2},
	\end{equation}
	where $d_5 = d_3 + d_4$.
\end{proposition}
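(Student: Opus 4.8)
The plan is to combine the two preceding propositions in the most direct way. By Proposition~\ref{majmain}, we already have
\[
\|\pi_N (\widehat{u}_{F}) - \widehat{u}_{N,F}\|_{N}^2 \leq J_{N,F}(\pi_N (\widehat{u}_{F})) - J_{N,F}(\widehat{u}_{N,F}).
\]
So the only remaining work is to substitute the two identities from Proposition~\ref{intermed1} into the right-hand side and collect the error terms.

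First I would write, using \eqref{inter1} and \eqref{inter2},
\[
J_{N,F}(\pi_N (\widehat{u}_{F})) - J_{N,F}(\widehat{u}_{N,F})
= \bigl(-\ENhat + J_F(\widehat{u}_F) + \epsilon_N\bigr) - \bigl(J_F(\rho_N(\widehat{u}_{N,F})) + \eta_N\bigr)
= -\ENhat + J_F(\widehat{u}_F) - J_F(\rho_N(\widehat{u}_{N,F})) + \epsilon_N - \eta_N.
\]
Next I would discard $-\ENhat$: by Proposition~\ref{stability}, $\|\pi_N(\widehat{u}_{F})\|_{N} \leq \|\widehat{u}_{F}\|_{\HF}$, hence $\ENhat = \|\widehat{u}_{F}\|_{\HF}^2 - \|\pi_N(\widehat{u}_{F})\|_{N}^2 \geq 0$, so $-\ENhat \leq 0$ and it can simply be dropped from the upper bound. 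Finally I would bound $\epsilon_N - \eta_N$ by $|\epsilon_N| + |\eta_N| \leq d_3 \delta_N^{\beta/2} + d_4 \delta_N^{\beta/2} = d_5 \delta_N^{\beta/2}$ using the estimates in Proposition~\ref{intermed1} and the definition $d_5 = d_3 + d_4$. Combining these steps yields exactly \eqref{esti3piN}.

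There is essentially no obstacle here: this proposition is purely a bookkeeping assembly of Propositions~\ref{majmain} and~\ref{intermed1}, with the one small observation that $\ENhat \geq 0$ (which is where Proposition~\ref{stability}'s norm inequality is used). All the genuine difficulty — the strong-convexity/first-order-optimality argument and the delicate expansions producing $\epsilon_N$ and $\eta_N$ with their explicit constants — has already been absorbed into the earlier propositions, so the proof of this statement should be just a few lines.
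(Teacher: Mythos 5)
Your proposal is correct and follows exactly the same route as the paper's proof: apply Proposition~\ref{majmain}, substitute the identities \eqref{inter1} and \eqref{inter2}, drop $-\ENhat$ using $\ENhat \geq 0$ from Proposition~\ref{stability}, and bound $\epsilon_N - \eta_N$ by $d_5\,\delta_N^{\beta/2}$. Nothing is missing.
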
 
\begin{proof}
	From Proposition~\ref{stability}, $\ENhat \geq 0 $. Using Propositions~\ref{majmain} and \ref{intermed1},   
	\begin{align*} 
		\|\pi_N (\widehat{u}_{F}) - \widehat{u}_{N,F}\|_{N}^2 
		&\leq  J_{N,F}(\pi_N (\widehat{u}_{F})) - J_{N,F}(\widehat{u}_{N,F})
		\\
		&= -\ENhat+J_F(\widehat{u}_F)+\epsilon_N - J_{F}( \rho_N(\widehat{u}_{N,F})) -\eta_N 
		\\
		&\leq J_F(\widehat{u}_F) - J_{F}( \rho_N(\widehat{u}_{N,F})) + d_3  \delta_N^{\beta/2} + d_4 \delta_N^{\beta/2}.
	\end{align*}
	Since $d_5=d_3 +d_4$, we have the bound in \eqref{esti3piN}.  
\end{proof}

To complete the construction of an error bound that is easy to read and interpret, we need to add a third quantity to the first two ($\beta$, which measures regularity, and $\delta_N$, which measures the grid size). This final quantity required for the error bound is the distance, in $\HF$, between  $\rho_N(\widehat{u}_{N,F})$ and the constraints $C_F$. This is the distance of the kernel interpolant of $\widehat{u}_{N,F}$ to the set of constraints:
\begin{eqnarray}\label{alphaN}
	\alpha_N:=d(\rho_N(\widehat{u}_{N,F}), C_F)=\| P_C(\rho_N(\widehat{u}_{N,F}))- \rho_N(\widehat{u}_{N,F})\|_{\HF},
\end{eqnarray}
where $P_C$ in $\HF$ is the orthogonal projection in $\HF$ onto the closed convex set $C_F$. 
\begin{proposition} \label{rem:alphaN}
	$\alpha_N$  tends to zero as $N \to \infty$.
\end{proposition}
\begin{proof}
	Indeed, as  $\widehat{u}_{F}  \in C_F$,
	$$
	\alpha_N = \|\rho_N(\widehat{u}_{N,F})- P_C(\rho_N(\widehat{u}_{N,F}) \|_{\HF}  \leq    \| \rho_N(\widehat{u}_{N,F})- \widehat{u}_{F}   \|_{\HF}.
	$$
	This bound is a coarse one: the aim of this proposition is not to provide an estimation for $\alpha_N$ in the general setting, but only the proof of the convergence to zero.  
	
	According to \eqref{intbound}, we have that
	\begin{equation}\label{convj}
		J_{F}(\rho_N(\widehat{u}_{N,F}) ) \xrightarrow[N \to \infty]{}  J_F(\widehat{u}_F).
	\end{equation}  
	According to \eqref{weakcompact}, 
	to the reproducing property of $\HF$ and by construction of the multi-affine extension $P$ in Definition \ref{def:multi-affine:extension}, there exists a subsequence $P(\rho_{N_k}(\widehat{u}_{N_k,F}))$, such that 
	$$ \lim_{k \to \infty} P(\rho_{N_k}(\widehat{u}_{N_k,F}))(x_i)= P(\widehat{u}_{F})(x_i), \ \forall i=1,\ldots n. $$
	In addition, the sequence   $P(\rho_N(\widehat{u}_{N,F}))(x_i)$ can have a unique accumulation point so that, as it is bounded, $\lim_{N \to \infty} P(\rho_N(\widehat{u}_{N,F}))(x_i)=P(\widehat{u}_{F})(x_i), \ \forall i=1,\ldots n $. Thus,  
	$$
	\displaystyle\frac{1}{\tau} \sum_{i=1}^{n} \left( P(\rho_N(\widehat{u}_{N,F}))(x_i) - y_i \right)^2 \to \displaystyle\frac{1}{\tau} \sum_{i=1}^{n} \left( P(\widehat{u}_F)(x_i) - y_i \right)^2.
	$$
	Thanks to \eqref{convj}, we have 
	$$
	\|   \rho_N(\widehat{u}_{N,F}) \|_{\HF} \to \|  \widehat{u}_{F}   \|_{\HF}.
	$$ 
	This property, combined with the weak convergence, leads to the convergence of $\rho_N(\widehat{u}_{N,F})$: 
	$$ 
	\| \rho_N(\widehat{u}_{N,F})- \widehat{u}_{F}   \|_{\HF} \to 0.
	$$
\end{proof} 
\begin{remark}
	The quantity $ \| \rho_N(\widehat{u}_{N,F})- \widehat{u}_{F}   \|_{\HF}$ is far from being a good estimation of $\alpha_N$. In the proof, we bound $\alpha_N$ by this term  because as $\widehat{u}_{N, F}$ may not belong to $\HF$, the term  $ \| \rho_N(\widehat{u}_{N, F})-\widehat{u}_{N,F} \|_{\HF}$ may not be defined. Nevertheless, in the case where $\HNF$ in included in $\HF$, one has the bound 
	$$\alpha_N \leq \| \rho_N(\widehat{u}_{N, F})-\widehat{u}_{N,F} \|_{\HF}.$$
	
	There are many references on the treatment of this term $\| \rho_N(\widehat{u}_{N, F})-\widehat{u}_{N,F} \|_{\HF}$  which quantifies the interpolation accuracy (see, e.g.,~\citep{hangel,narco,wynne}). It is often denoted by $\Vert I_X (f)-f \Vert_{\HF}$ where $I_X=\rho_N$ and $X$ is the set of interpolation points ($S_N$ in our case). In \citep{karvonen},  it is written that, in the general setting, the term  $\Vert f-I_X (f)\Vert_{\HF}$   can tend to zero arbitrarily slowly. Nevertheless, for particular RKHS, for example those corresponding to particular Sobolev spaces, and for functions $f$ smooth enough, this term can be bounded in terms of the fill distance, written $h_X$, which corresponds to the density of locations of $X$. This fill distance corresponds to $\delta_N$ in our case. 
	
	In \citep{narco}, the case where $k(x,y)=\Phi(\vert x - y \vert)$ is considered. This means that the RKHS corresponds to the Native space issued from the radial basis function (RBF) $\Phi$. There, $I_X(f)$ is called the RBF interpolant. For one dimensional functions, if the RKHS corresponds to the classical Sololev space  $W_2^{  \alpha }=\{ f \in L_2, D^{  k}f \in  L_2, k \leq \alpha \}  $, where $D^{  k}f$ denotes the distributional derivative, then one can derive the error estimate (see Lemma 4.1 and Theorem 4.2 in \citep{narco}), for $f\in W_2^{  r }$,  
	\begin{equation*}
		\Vert f-I_X (f)\Vert_{W_2^{  \alpha }} \leq C h_X^{r-\alpha}  \Vert f \Vert_{W_2^{  r }},
	\end{equation*}
	where $\alpha \leq r$. Note that $f$ has to be in a smoother Sobolev space. This is called the doubling trick by~\citep{hangel}. There, an estimate is given to some other Sobolev space (see Theorem 5.1).
	
	When the underlying structure of the data is a stochastic process, the interpolation results are often given in the Gaussian process setting, also known as Kriging~\citep{stein1999interpolation}. For instance in~\citep{wynne}, the interpolant is given by the Gaussian process mean. An error estimate function of the fill distance is provided in their Theorem 1 which . Besides, in their proof of Theorem 1, they use the estimate of \citep{narco}.
\end{remark}

Let us now provide the error bounds.
We first address the case where $\rho_N(\widehat{u}_{N,F})\in C_F$, i.e. $\alpha_N = 0$, which corresponds to Condition~\ref{constraintsfinal}.
\begin{condition} \label{constraintsfinal}
	For $N$ large enough, $\rho_N(\widehat{u}_{N,F})\in C_F.$
\end{condition}

\begin{theorem}  \label{label:UB:no:extra:assumption}
	Under Conditions~\ref{cond:contKernel} to \ref{constraintsfinal},
	\begin{equation}
		\| \widehat{u}_{N,F}-\widehat{u}_{F}\|_{\infty} = \mathcal{O}( \delta_N^{\beta/4}).
	\end{equation} 
\end{theorem}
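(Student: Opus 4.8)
The plan is to combine Proposition~\ref{prop8} and Proposition~\ref{theorem:inter} and then to show that, under Condition~\ref{constraintsfinal}, the energy gap $J_F(\widehat{u}_F)-J_F(\rho_N(\widehat{u}_{N,F}))$ appearing in~\eqref{esti3piN} is itself $\mathcal{O}(\delta_N^{\beta/2})$. Once that is established, Proposition~\ref{theorem:inter} gives $\|\pi_N(\widehat{u}_F)-\widehat{u}_{N,F}\|_N^2 = \mathcal{O}(\delta_N^{\beta/2})$, hence $\|\pi_N(\widehat{u}_F)-\widehat{u}_{N,F}\|_N = \mathcal{O}(\delta_N^{\beta/4})$, and substituting this into~\eqref{estierror} yields $\|\widehat{u}_{N,F}-\widehat{u}_F\|_\infty = \mathcal{O}(c\,\delta_N^{\beta/4}) + \mathcal{O}(\delta_N^{\beta/2}) = \mathcal{O}(\delta_N^{\beta/4})$, since $\beta/2 \geq \beta/4$. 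So the whole argument reduces to controlling the energy gap.

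For the energy gap, I would argue in two directions. \emph{Lower bound on $J_F(\rho_N(\widehat{u}_{N,F}))$:} under Condition~\ref{constraintsfinal} we have $\rho_N(\widehat{u}_{N,F}) \in \HF \cap C_F$ for $N$ large, so by optimality of $\widehat{u}_F$ in~\eqref{eq:constrainedSpline}, $J_F(\widehat{u}_F) \leq J_F(\rho_N(\widehat{u}_{N,F}))$, i.e. the gap is $\leq 0$ on that side — this already shows, with Proposition~\ref{theorem:inter}, that $\|\pi_N(\widehat{u}_F)-\widehat{u}_{N,F}\|_N^2 \leq d_5\delta_N^{\beta/2}$ \emph{provided} $d_5$ is bounded in $N$. \emph{Boundedness of the constants:} here one must check that $d_3,d_4,d_5$ from Proposition~\ref{intermed1} stay bounded, which amounts to showing $\|\widehat{u}_{N,F}\|_N$ is bounded uniformly in $N$. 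This follows because $\pi_N(g) \in \HNF\cap C_F$ for any fixed $g \in \HF\cap C_F$ (Condition~\ref{constraints:stable:projection}), so $\|\widehat{u}_{N,F}\|_N^2 \leq J_{N,F}(\widehat{u}_{N,F}) \leq J_{N,F}(\pi_N(g))$, and the right-hand side converges (using Proposition~\ref{stability} for the norm term and Proposition~\ref{interpolation} / uniform convergence of $P\pi_N(g)$ for the data-fit term), hence is bounded. With $d_5$ bounded, Proposition~\ref{theorem:inter} directly gives the $\mathcal{O}(\delta_N^{\beta/2})$ bound on the squared $N$-norm, and we are done as described above.

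I expect the main obstacle to be the bookkeeping around the uniform boundedness of $\|\widehat{u}_{N,F}\|_N$ (equivalently of the constants $d_3,d_4,d_5$): one needs a fixed feasible comparator $g\in\HF\cap C_F$ (available by Condition~\ref{cond:HfintCf}), to push it through $\pi_N$ into $\HNF\cap C_F$, and to control $J_{N,F}(\pi_N(g))$ uniformly — the norm part via $\|\pi_N(g)\|_N \leq \|g\|_{\HF}$ from Proposition~\ref{stability}, and the data-fit part via $P(\pi_N(g))(x_i) \to P(g)(x_i)$ from the uniform convergence $\pi_N(g)\to g$ in $E_F$ and $1$-Lipschitzness of $P$. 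Everything else is assembling inequalities already proved: the only subtlety is making sure the ``$\mathcal{O}$'' hides constants that genuinely do not depend on $N$, which is exactly what this comparator argument secures. Note also that Condition~\ref{constraintsfinal} is used \emph{only} to make the gap sign-definite; without it one would instead pay the price $\alpha_N$, which is precisely the content of Theorem~\ref{th4}.
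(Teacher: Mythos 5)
Your proposal is correct and follows essentially the same route as the paper: Condition~\ref{constraintsfinal} makes $\rho_N(\widehat{u}_{N,F})$ feasible so that optimality of $\widehat{u}_F$ gives $J_F(\widehat{u}_F)-J_F(\rho_N(\widehat{u}_{N,F}))\leq 0$, Proposition~\ref{theorem:inter} then yields $\|\pi_N(\widehat{u}_F)-\widehat{u}_{N,F}\|_N^2\leq d_5\delta_N^{\beta/2}$, and Proposition~\ref{prop8} converts this to the stated $\mathcal{O}(\delta_N^{\beta/4})$ bound. Your extra comparator argument establishing that $\|\widehat{u}_{N,F}\|_N$ (hence $d_4$ and $d_5$) is bounded uniformly in $N$ is a legitimate point the paper leaves implicit in this proof (it appears only in the appendix, via the boundedness of $\widehat{h}^N=\rho_N(\widehat{u}_{N,F})$ in $\HF$ shown in the proof of \eqref{mainlim}), and your way of securing it is sound.
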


\begin{proof}
	From Proposition~\ref{prop8}, $\| \widehat{u}_{N,F}-\widehat{u}_{F}\|_{\infty}    \leq    c \ \|\pi_N (\widehat{u}_{F}) - \widehat{u}_{N,F}\|_{N} + d_1 \delta_N^{\beta/2}.$ 
	If Condition~\ref{constraintsfinal} is verified, then $\rho_N(\widehat{u}_{N,F}) \in C_F$ so that $J_F(\widehat{u}_F) - J_{F}( \rho_N(\widehat{u}_{N,F})) \leq 0$. Hence, from Proposition~\ref{theorem:inter},
	\begin{displaymath}
		\|\pi_N (\widehat{u}_{F}) - \widehat{u}_{N,F}\|_{N}^2  \leq   
		d_5  \delta_N^{\beta/2},
	\end{displaymath} 
	so that 
	\begin{displaymath}
		\| \widehat{u}_{N,F}-\widehat{u}_{F}\|_{\infty}    \leq     c  \sqrt{ d_5  \delta_N^{\beta/2} }   + d_1  \delta_N^{\beta/2}. 
	\end{displaymath} 
\end{proof}

Intuitively, Condition~\ref{constraintsfinal} is expected to hold when $\widehat{u}_{N,F}$ is significantly ``inside'' the constraint set, since $\rho_N(\widehat{u}_{N,F})$ is expected to be close to $\widehat{u}_{N,F}$ for large $N$. 
Nevertheless, when $\widehat{u}_{N,F}$ is close to the boundary of the constraint set, then its kernel interpolant  $\rho_N(\widehat{u}_{N,F})$ could fall outside of this set. To give a very simple example, when the constraint set imposes functions to take values in $[0,1]$, it is possible that the values of a function at the knots are in $[0,1]$ but very close to $0$ or $1$, so that its kernel interpolant function takes some values outside of $[0,1]$, for some kernels $K$.

Hence, it is valuable to analyze the case where Condition~\ref{constraintsfinal} does not hold (i.e. $\alpha_N \ne 0$). In this case,  the convergence proof of the error bound becomes more challenging as we show in~Theorem~\ref{th4} next.
\begin{theorem}\label{th4}  
	Under Conditions~\ref{cond:contKernel} to \ref{constraints:stable:projection},
	\  with $c$ as in Proposition~\ref{prop8}, 
	\begin{equation}\label{mainbound2}
		\| \widehat{u}_{N,F}  - \widehat{u}_{F} \|_{\infty}  \leq  c \  \sqrt{d_8 \alpha_N +  d_5  \delta_N^{\beta/2}} + d_1 \delta_N^{\beta/2},
	\end{equation} 
	where $d_8$ is a constant defined by \eqref{d6}, \eqref{d7} and \eqref{d8}. 
\end{theorem}

\begin{proof} 
	We have
	\begin{align*} 
		J_F(\widehat{u}_F)-J_{F}( \rho_N(\widehat{u}_{N,F}))
		=& \ J_F(\widehat{u}_F)-J_{F}(P_C(\rho_N(\widehat{u}_{N,F}))) \\ 
		&+ J_{F}(P_C(\rho_N(\widehat{u}_{N,F})))-J_{F}(\rho_N(\widehat{u}_{N,F})).
	\end{align*}
	As $P_C(\rho_N(\widehat{u}_{N,F})) \in \HF \cap C_F$, then  $J_F(\widehat{u}_F)-J_{F}(P_C(\rho_N(\widehat{u}_{N,F})))\leq 0$. This implies that  
	\begin{align*}
		J_F(\widehat{u}_F)-J_{F}( \rho_N(\widehat{u}_{N,F})) 
		& \leq J_{F}(P_C(\rho_N(\widehat{u}_{N,F})))-J_{F}(\rho_N(\widehat{u}_{N,F})) \\
		&= \| P_C(\rho_N(\widehat{u}_{N,F})) \|^2_{\HF}-\|  \rho_N(\widehat{u}_{N,F}) \|^2_{\HF} + R, 
	\end{align*}
	where 
	\begin{align*}
		R =& \ \frac{1}{\tau} \sum_{i=1}^{n} \bigg[ \bigg( (PP_C(\rho_N(\widehat{u}_{N,F})))(x_i) - y_i \bigg)^2 - \bigg( (P\rho_N(\widehat{u}_{N,F}))(x_i)   - y_i\bigg)^2\bigg] \\
		=& \ \frac{1}{\tau} \sum_{i=1}^{n}  \bigg[\bigg( (PP_C(\rho_N(\widehat{u}_{N,F})))(x_i) - (P\rho_N(\widehat{u}_{N,F}))(x_i) \bigg) \\
		& \hspace{7ex} \times \bigg( (PP_C(\rho_N(\widehat{u}_{N,F})))  (x_i) + (P\rho_N(\widehat{u}_{N,F}))(x_i) - 2y_i \bigg)\bigg]. 
	\end{align*}
	As  $P_C$ is 1-Lipschitz, then with Proposition \eqref{d0},
	\begin{align*} 
		\| P_C(\rho_N(\widehat{u}_{N,F}))-P_C(\widehat{u}_{F})\|_{\HF}  
		&\leq \| \rho_N(\widehat{u}_{N,F})-\widehat{u}_{F}\|_{\HF} \\
		&\leq  \|\rho_N(\widehat{u}_{N,F})\|_{\HF} +\| \widehat{u}_{F}\|_{\HF}
		=  \| \widehat{u}_{N,F}\|_{N} +\| \widehat{u}_{F}\|_{\HF} \\
		&\leq d_0+ \| \widehat{u}_{F}\|_{\HF}. 
	\end{align*} 
	According to~\eqref{hilb2},  $\| P_C(\rho_N(\widehat{u}_{N,F}))-P_C(\widehat{u}_{F})\|_{\infty}  \leq c \ (  d_0+ \| \widehat{u}_{F}\|_{\HF})$. Hence 
	\begin{align*}
		\| P_C(\rho_N(\widehat{u}_{N,F})) \|_{\infty}  &\leq c(d_0+ \| \widehat{u}_{F}\|_{\HF})+\|  P_C(\widehat{u}_{F})\|_{\infty} . 
	\end{align*}
	With the definition of the multi-affine extension $P$ (see Definition~\ref{def:multi-affine:extension}) we deduce
	\begin{align*}
		\vert  P(P_C(\rho_N(\widehat{u}_{N,F})))(x_i)\vert \leq  	\| P_C(\rho_N(\widehat{u}_{N,F})) \|_{\infty}  &\leq c(d_0+ \| \widehat{u}_{F}\|_{\HF})+\|  P_C(\widehat{u}_{F})\|_{\infty}.
	\end{align*}
	As we have 
	\begin{align*}
		\vert P\rho_N(\widehat{u}_{N,F})(x_i) \vert \leq  
		\Vert \rho_N(\widehat{u}_{N,F}) \Vert_{\infty} 
		\leq c \Vert \rho_N(\widehat{u}_{N,F}) \Vert_{\HF}  
		=  c \Vert  \widehat{u}_{N,F} \Vert_{N} \leq c \ d_0, 
	\end{align*}
	this implies that, for all $i\in \{1,\ldots,n\}$, 
	\begin{align}\label{d6}
		| (PP_C(\rho_N(\widehat{u}_{N,F})))(x_i)+ (P(\rho_N(\widehat{u}_{N,F})))(x_i) - 2y_i  | \leq d_6,
	\end{align} 
	where 
	$
	d_6:= 2c \ d_0+ c \ \| \widehat{u}_{F}\|_{\HF} +\|  P_C(\widehat{u}_{F})\|_{\infty}  +2 \max \vert y_i \vert
	$. Also, 
	\begin{equation}\label{d7}
		\| P_C(\rho_N(\widehat{u}_{N,F})) \|_{\HF}+\|  \rho_N(\widehat{u}_{N,F}) \|_{\HF} \leq d_7
	\end{equation}
	where 
	$
	d_7:= 2d_0+ \| \widehat{u}_{F}\|_{\HF}+\|  P_C(\widehat{u}_{F})\|_{\HF}
	$. Moreover,
	\begin{align*}
		\Big|  (PP_C(\rho_N(\widehat{u}_{N,F})))(x_i)-P(\rho_N(\widehat{u}_{N,F}))(x_i) \Big|   
		&\leq     \| P_C(\rho_N(\widehat{u}_{N,F})) -\rho_N(\widehat{u}_{N,F}) \|_{\infty} 	\\
		&\leq   c \| P_C(\rho_N(\widehat{u}_{N,F})) -\rho_N(\widehat{u}_{N,F}) \|_{\HF}  
		= c \alpha_N.
	\end{align*}
	Hence $R \leq \dfrac{n \ c d_6}{\tau} \alpha_N$. As 
	\begin{align*} 
		\| P_C\rho_N(\widehat{u}_{N,F}) \|^2_{\HF}-\|  \rho_N(\widehat{u}_{N,F}) \|^2_{\HF}  
		=& \  ( \| P_C\rho_N(\widehat{u}_{N,F}) \|_{\HF}+\|  \rho_N(\widehat{u}_{N,F}) \|_{\HF})  
		\\ 
		& \times  (\| P_C\rho_N(\widehat{u}_{N,F}) \|_{\HF}-\|  \rho_N(\widehat{u}_{N,F}) \|_{\HF}) 
		\leq  d_7  \alpha_N,
	\end{align*}
	then 
	\begin{align}\label{d8} 
		J_F(\widehat{u}_F)-J_{F}( \rho_N(\widehat{u}_{N,F}))  \leq   d_8 \alpha_N,
	\end{align}
	where 
	$
	d_8:= d_7 +\dfrac{ncd_6}{\tau}. 
	$
	Using \eqref{estierror} and \eqref{esti3piN} from Propositions~\ref{prop8} and~\ref{theorem:inter}, we obtain the error bound in \eqref{mainbound2}. 
\end{proof}

\begin{remark} \label{rem:multiD}
	With a similar approach as provided here, it is possible to provide error bounds on the numerical approximation of the constrained smoothing problem in higher dimensions. In particular, the multi-affine extension is defined for general dimensions in \citep{bachoc2022sequential}, and all its properties, including those related to the constraint sets of bounded, monotonic, and componentwise convex functions, are also established for general dimensions. Nevertheless, presenting detailed proofs, as we do here, in general dimension yields significantly more complex notations and cumbersome arguments. To maintain readability, we present our results and proofs in one dimension.  
\end{remark}

\begin{remark} \label{rem:regularity}
	Our error bounds depend on the regularity parameter $\beta \in (0,1]$ for the kernel $K$ (Condition~\ref{cond:contKernel}), using the notion of H\"older-continuity, and with the rate $\mathcal{O}(\delta_N^{\beta/4})$ in Theorem~\ref{label:UB:no:extra:assumption}.
	It is natural to ask whether faster decay rates of the upper bounds could be achieved with stronger regularity assumptions, particularly by assuming derivatives of multiple orders. However, it is unclear if this additional regularity would be beneficial in our setting. This is because we rely on piecewise linear interpolation, which typically does not gain further benefits from regularity beyond Lipschitzness. As explained in \citep{bachoc2022sequential}, piecewise linear interpolation is crucial for numerically handling standard constraint sets (boundedness, monotonicity and convexity). Using an interpolation scheme that would benefit from regularity beyond Lipschitzness, for instance piecewise polynomial interpolation, is not suitable for numerically handling these constraint sets.   
\end{remark}

\section{Numerical Experiments}
\label{sec:numexp} 
In this section, we aim to numerically illustrate Theorems~\ref{label:UB:no:extra:assumption} and~\ref{th4}. Let us recall that the approximate solution $\widehat{u}_{N,F}$ is also the MAP estimate of a GP approximation conditionally to
noisy observations and the inequality constraints \citep{GMB}. Our numerical assessment relies on this property. Therefore, we consider constrained GPs with stationary Mat\'ern kernels~\citep{Genton2001Kernels}:
\begin{equation}
	K(x,x') = \sigma^2 \frac{2^{1-\nu}}{\Gamma(\nu)} \left(\sqrt{2\nu} \frac{|x-x'|}{\ell}\right)^\nu H_\nu\left(\sqrt{2\nu} \frac{|x-x'|}{\ell}\right),
	\label{eq:Matkernel}
\end{equation}
where $x, x' \in [0,1]$, $\Gamma$ is the Gamma function, $H_\nu$ is the modified Bessel function of the second kind of order $\nu$, and $(\sigma^2, \ell) \in (0, \infty)^2$ are the variance and length-scale parameters, respectively. The parameter $\nu \in (0, \infty)$ allows controlling the regularity of the GP. The larger $\nu$, the smoother the GP samples. 

Given the settings above, we sample twenty constrained GP replicates using the finite-dimensional approximation in~\citep{LopezLopera2017FiniteGPlinear} assuming an equispaced grid of knots with $N = 200$. The choice of $N$ balances the need for better resolution of the piecewise approximation while considering the computational limitations inherent in Monte Carlo techniques. To introduce noise, we corrupt the samples by using independent centered Gaussian noises with $\tau = 5 \times 10^{-2}$. 

We compute the MAP estimate $\widehat{u}_{N, F}$ for each random noisy replicate. This procedure results in twenty predictors that will be used to illustrate Theorems~\ref{label:UB:no:extra:assumption} and~\ref{th4}. For the predictor $\widehat{u}_{F}$, which cannot be evaluated in practice, we approximate it using $\widehat{u}_{N, F}$ assuming an equispaced grid of knots with $N = 10^3$. As parameter estimation is not the focus here, we use the same covariance parameters and noise variance that have been set to generate the constrained GP replicates.

In the first part of our experiments (Section~\ref{sec:numexp:subsec:fixBeta}), we focus on the cases where the grid of the knots is either dense or not. Thus, we fix $\nu = 5/2$ to have the same regularity conditions. In the second part (Section~\ref{sec:numexp:subsec:varyBeta}), we vary $\nu$ seeking to test convergence for different values of $\beta$ while keeping promoting a dense grid of knots. In our context, the link between $\nu$ and $\beta$ is given by $\beta = \min(1, 2\nu)$ (see for instance \citep{loh2015estimating}). 

For dense grids, we conduct the MaxMod algorithm introduced by~\citep{bachoc2022sequential} and discussed in Section~\ref{sec:intro}. We recall that this algorithm iteratively constructs a constrained MAP estimate by sequentially allocating knots for the piecewise linear approximation, each iteration aiming to maximize its modification. We consider a minimal initial number of knots (i.e. $N_0 = 2$), and a maximal budget $N_{\max} = 250$. This budget has been set aiming for a trade-off between computational time and numerical stability due to inversion of covariance matrices. Using the MaxMod algorithm will allow to verify convergence of the error bounds without involving equispaced grid of knots necessarily.

The implementation of the constrained GPs and the MaxMod algorithm are based on the R package \texttt{lineqGPR} \citep{LopezLopera2022LineqGPR}.

\subsection{Error bounds with fixed regularity assumptions}
\label{sec:numexp:subsec:fixBeta}
In this experiment, we sample random GP replicates under monotonicity and boundedness constraints (see Figure~\ref{fig:toyExample1_Samples}). We impose the boundedness constraint $0 \leq Y(x) \leq 1$, for all $x \in [0,1]$, and use a Mat\'ern 5/2 kernel with 
$\sigma^2 = 1$ and $\ell = 0.4$. We consider two cases where the grid of knots is dense or not. For dense grids, we apply the MaxMod algorithm to each random replicate, resulting in twenty distinct MAP predictors. These predictors are then used in the subsequent experiments to illustrate our theorems. For non-dense grids, we restrict the addition of knots to the interval $I = [0, 0.3] \cup [0.6, 1]$. The refinement process is then conducted via rejection sampling with $t \sim \operatorname{Uniform}(0, 1)$. In both cases, we set $N_{\max} = 250$. 

Figure~\ref{fig:toyExample1_Boxplots} presents boxplots of the error $\|\widehat{u}_{N, F} - \widehat{u}_{F}\|_{\infty}$ and the grid size $\delta_N$ (defined in \eqref{delta_N}) for the twenty replicates. We must remark that the asymptotic error bounds of $\|\widehat{u}_{N, F} - \widehat{u}_{F}\|_{\infty}$ cannot be displayed, as $\alpha_N$ (defined in \eqref{alphaN}) cannot be computed numerically. We observe that the error decreases as $\delta_N$ decreases, which is consistent with Theorems~\ref{label:UB:no:extra:assumption} and~\ref{th4} as the asymptotic error there bounds become smaller as $\delta_N$ decreases. The boxplots show median error values smaller than $10^{-3}$ once the maximal budget $N_{\max}$ is reached, except for the example under boundedness constraints with non-dense grids of knots. This increase in error is due to abrupt changes in the MAP around $x = 0.3$ and $x = 0.6$ (limits of the rejection interval). To achieve smaller error values, it is possible to repeat the experiments with a larger $N_{\max}$ expecting adding knots close to the limits. In particular for this example, a median error value smaller than $10^{-3}$ is achieved after $N = 275$.
\begin{figure}[t!]
	\centering
	\includegraphics[height = 0.285\linewidth]{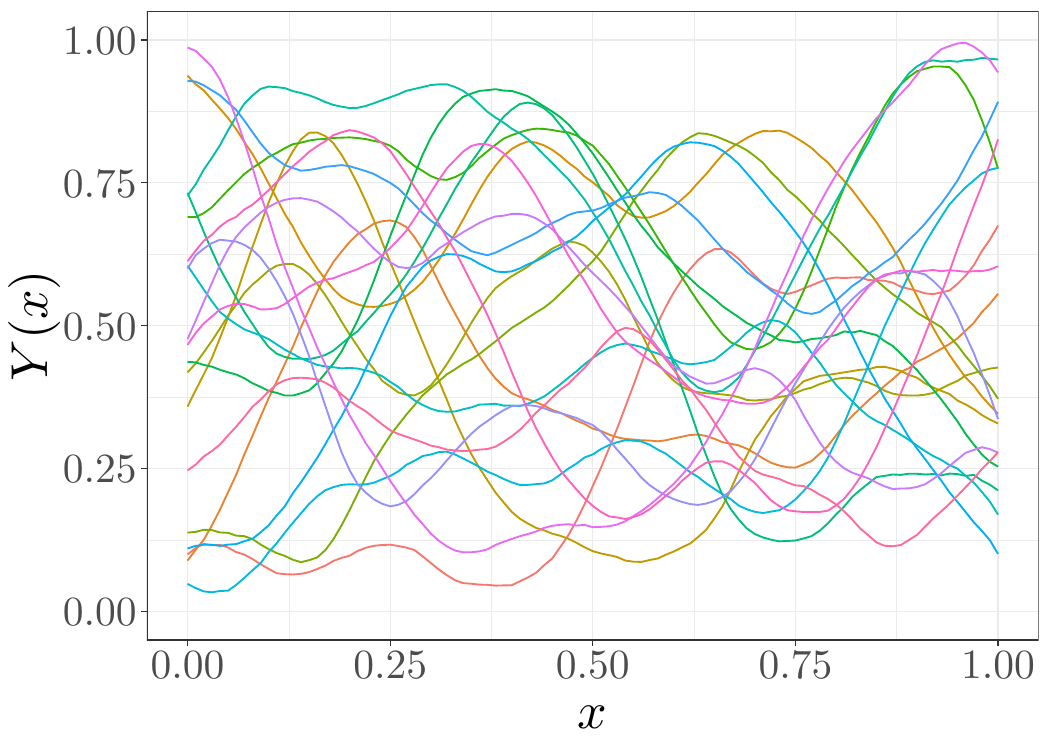}
	\includegraphics[height = 0.285\linewidth]{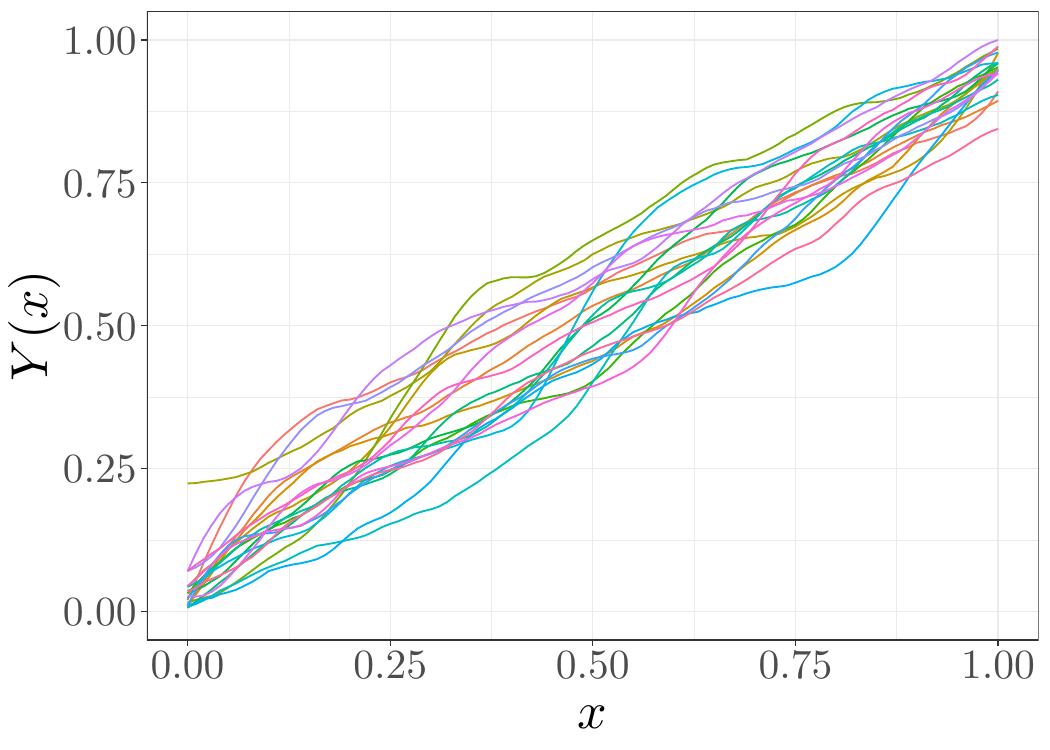}
	\caption{Random GP replicates under boundedness (left) and both boundedness and monotonicity constraints (right) used in the experiments in Section~\ref{sec:numexp:subsec:fixBeta}. As boundedness constraints, we consider $0 \leq Y(x) \leq 1$, for all $x \in [0,1]$.}
	\label{fig:toyExample1_Samples}
\end{figure}
\begin{figure}[t!]
	\centering
	%
	\includegraphics[height = 0.24\linewidth]{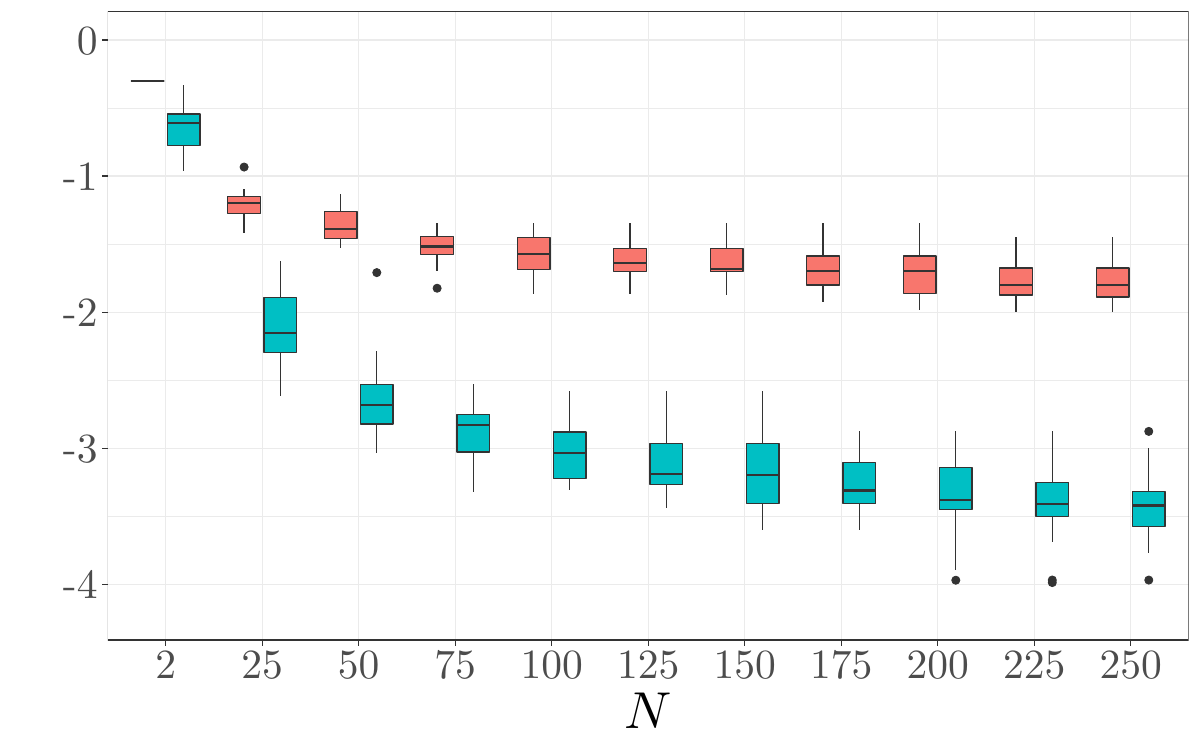}
	\includegraphics[height = 0.24\linewidth]{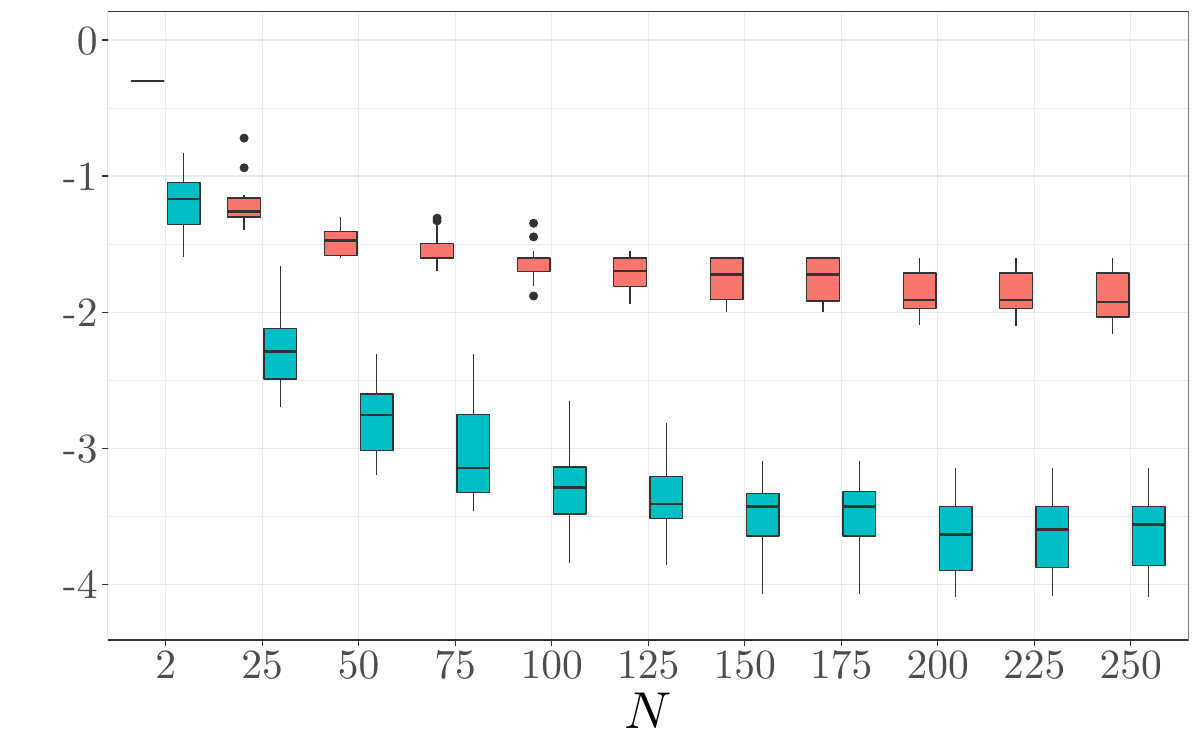}
	\vspace{-3.3ex}
	
	\includegraphics[height = 0.24\linewidth]{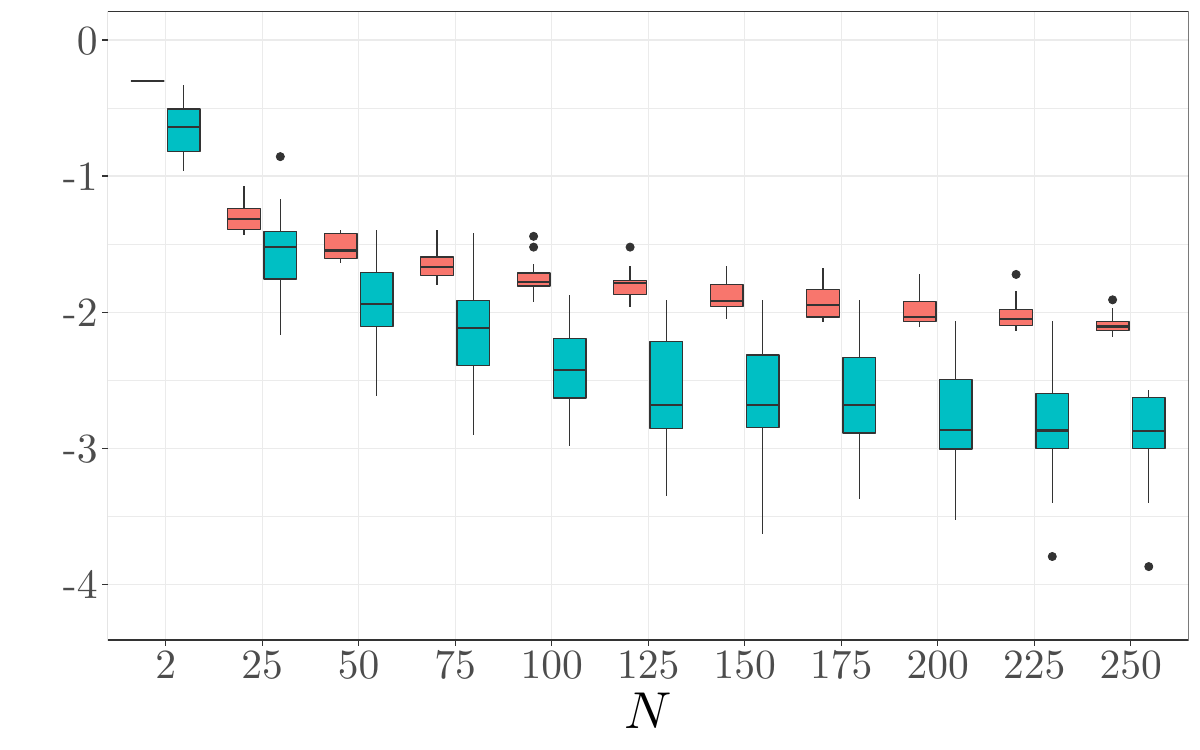}
	\includegraphics[height = 0.24\linewidth]{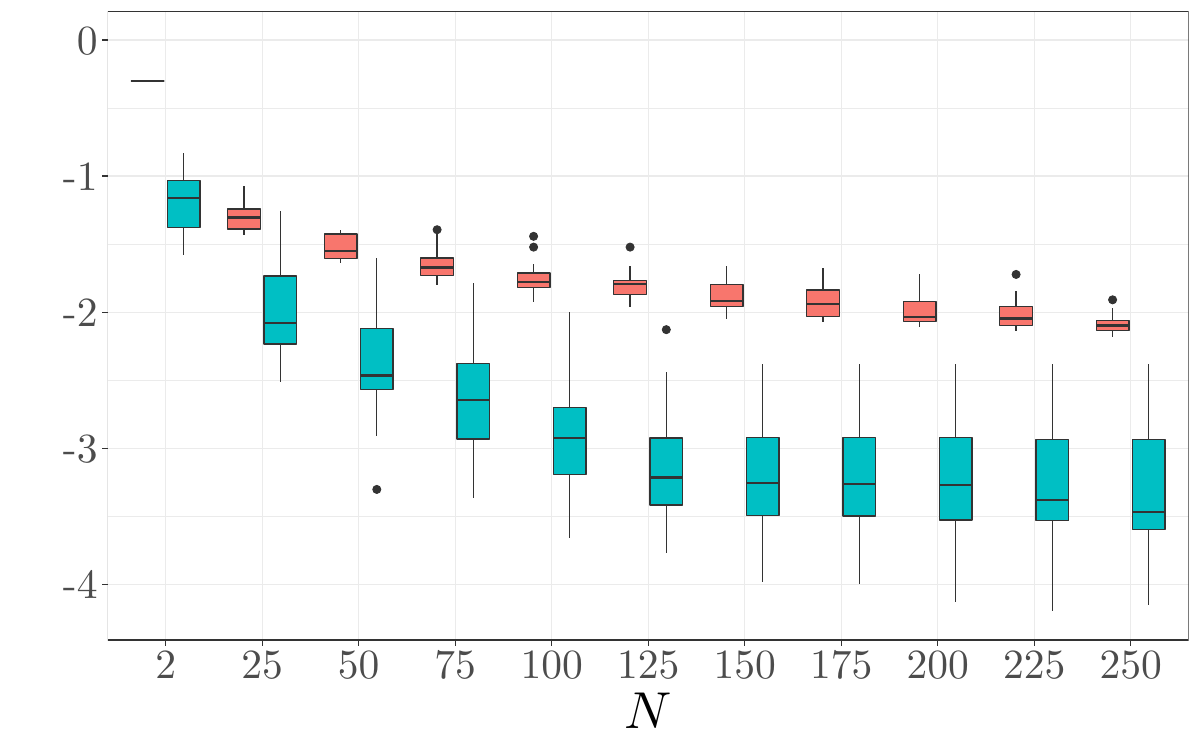}
	\caption{Boxplots of the error $\log_{10} \|\widehat{u}_{N, F} - \widehat{u}_{F}\|_{\infty}$ (blue) and the grid size $\delta_N$ (red) as a function of the number of knots $N$. The boxplots are computed for the twenty replicates in Figure~\ref{fig:toyExample1_Samples}, considering cases when the grid of knots is dense or not (top and bottom, respectively). Results are presented for examples under boundedness (left) or both boundedness and monotonicity (right) constraints.}
	\label{fig:toyExample1_Boxplots}
\end{figure}

\begin{remark}
	To assess the robustness of Theorems~\ref{label:UB:no:extra:assumption} and~\ref{th4} against higher noise levels, we repeated the experiments in Figure~\ref{fig:toyExample1_Boxplots} while increasing the noise variance by a factor of ten, i.e., $\tau = 5\times10^{-1}$. The (unreported) experiments have shown that although the values of $\|\widehat{u}_{N, F} - \widehat{u}_{F}\|_{\infty}$ and $\delta_N$ are slightly higher under increased noise variance, they still exhibit overall decreasing trends as $N$ increases. This confirms the robustness of both theorems.
\end{remark}

\subsection{Error bounds with different regularity assumptions}
\label{sec:numexp:subsec:varyBeta}
We now perform experiments with dense grids while varying $\beta$, the parameter related to the regularity of the kernel. To do so, we consider $\nu = 1/4, 3/8, 1/2, 3/4, 5/2$ (respectively, $\beta = 1/2, 3/4, 1, 1, 1$). Although $\beta = 1$ for $\nu = 1/2, 3/4, 5/2$, we opt to conduct experiments with these values to observe the impact of the GP samples' smoothness order on the rate of error decrease. We follow the same procedure as described in~Section~\ref{sec:numexp:subsec:fixBeta} with the same GP parameters except for the length-parameter that we have increased to $\ell = 0.8$ to control the variability of the samples (see~Figure~\ref{fig:toyExample2_Boxplots}, left panels). This choice seeks to have visible convergence trends for $N \leq N_{\max} = 250$. As monotonicity is unlikely to be satisfied for Mat\'ern kernels with $\nu \leq 1/2$, we focus here on boundedness constraints. To assess if narrower bounds may have an impact in the results, we suggest GP replicates satisfying $0 \leq Y(x) \leq 0.5$. 

\begin{figure}[t!]
	\centering
	
	
	
	
	\includegraphics[width=0.72\linewidth]{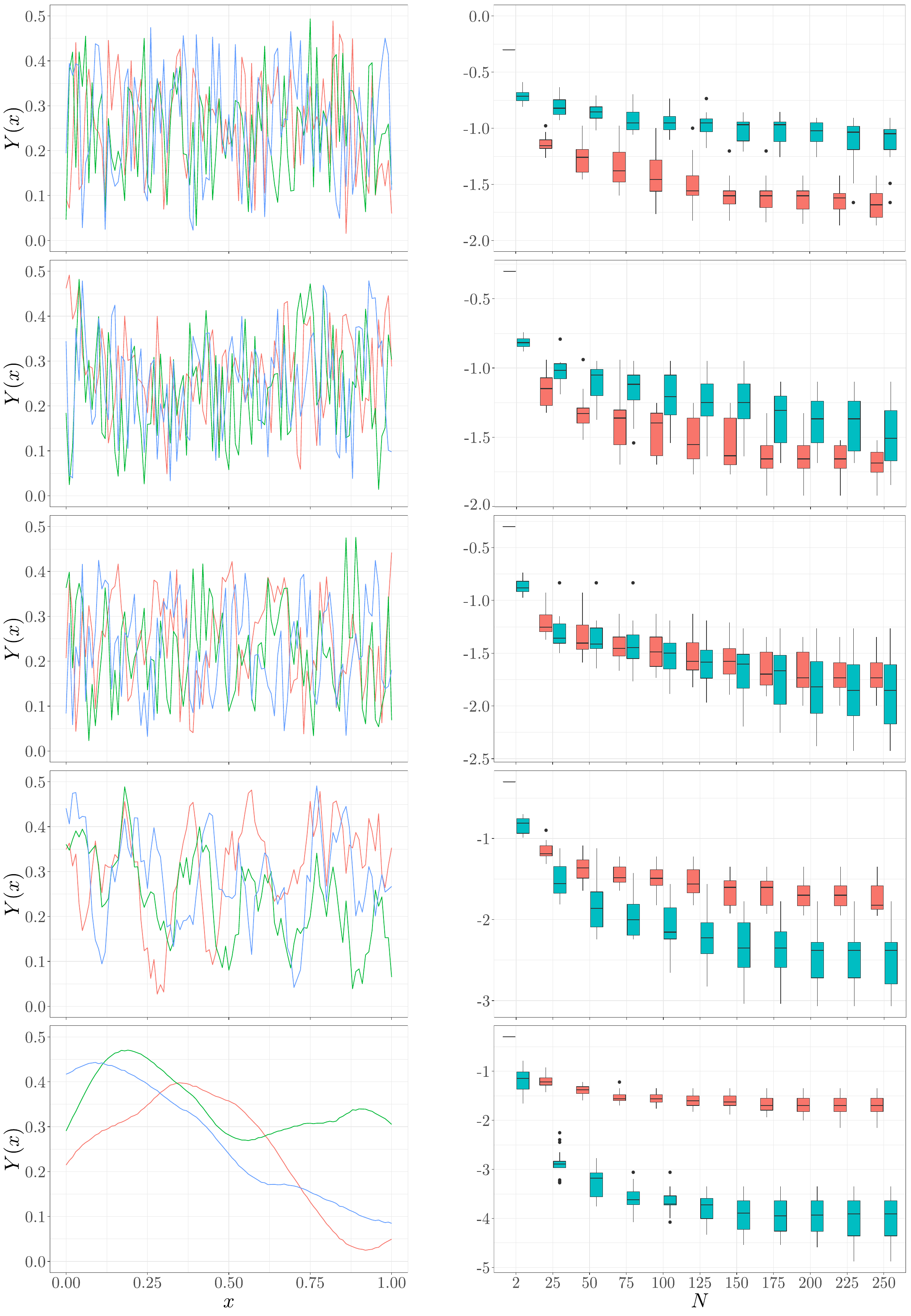}
	
	\caption{Right: boxplots of $\log_{10} \|\widehat{u}_{N, F} - \widehat{u}_{F}\|_{\infty}$ (blue) and $\delta_N$ (red) as a function of $N$. The boxplots are computed for twenty constrained GP replicates using a Mat\'ern kernel with $\nu =  1/4, 3/8, 1/2, 3/4, 5/2$ (from top to bottom). For a better visualization, we only display three of the twenty replicates (left).}
	\label{fig:toyExample2_Boxplots}
\end{figure}
In line with Figure~\ref{fig:toyExample1_Boxplots}, Figure~\ref{fig:toyExample2_Boxplots} shows a decreasing trend of the error $\|\widehat{u}_{N, F} - \widehat{u}_{F}\|_{\infty}$ as $\delta_N$ decreases independently of the value of $\beta$. In particular, we note that the error decreases faster as $\beta$ increases, which is consistent with Theorems~\ref{label:UB:no:extra:assumption} and~\ref{th4} as the asymptotic error bounds there become smaller as $\beta$ increases. In addition, the rate of decrease is higher for smoother GP samples (i.e. for larger $\nu$ values). This pattern has also been verified for the squared exponential kernel, i.e. when $\nu \to \infty$, in experiments unreported here. 

\section{Conclusions}
\label{conclude} 
We have introduced a general error bound (see Theorem~\ref{th4}) for the constrained optimal smoothing problem and for the equivalent formulation with the MAP estimator. We show that this bound depends on the grid size, the regularity of the kernel, and the distance from the kernel interpolant of the approximation to the set of constraints. In particular, if the kernel interpolant satisfies the constraints, then the latter distance is zero, and the error bound is given by $\mathcal{O}( \delta_N^{\beta/4})$ (see Theorem~\ref{label:UB:no:extra:assumption}). Convergence results are provided for non-equispaced and non-dense grids of knots, allowing certification of sequential schemes, such as the MaxMod algorithm in~\citep{bachoc2022sequential}, introduced for the efficient allocation of knots. To the best of our knowledge, our theoretical results are the first to provide quantitative error bounds for numerical approximations of constrained GPs.

Our results are illustrated numerically through various synthetic examples that account for different types of inequality constraints (e.g., boundedness and monotonicity) and regularity assumptions (i.e. smoothness of the GP samples). Additionally, we examine scenarios with both dense and non-dense grids of knots. Our experiments show that the approximation error decreases as the grid size decreases, with a faster rate of decrease observed for smoother GP samples. This is in line with our theoretical analysis.

\paragraph{Acknowledgments.} We thank Olivier Roustant (IMT, France) for his contribution to this work. Indeed, this study has begun thanks to his interest about the subject. Scientific exchanges with him enabled the improvement of the quality of this work. This work has been supported by the projects GAP (ANR-21-CE40-0007) and GAME (ANR-23-CE46-0007) of the French National Research Agency (ANR). We are grateful to the anonymous reviewers for their constructive feedback, which significantly enhanced the quality of this paper.

\appendix
\section{Remaining proofs}\label{sectionproofs} 

In order to make the paper self-contained, we provide in this appendix the proofs that are primarily technical or containing pre-existing concepts from other works.

\begin{proof}[Proof of Proposition~\ref{prop:psif}]
	Note that this result may already be known, but since we haven't found a proper reference with the full proof, we provide it for self-containing. Let $f$ be continuous on $S$. Suppose that $\lim_{\delta \to 0} \Psi_f(\delta)=0$ is not true. Hence, there exists a sequence $(\delta_n)_{n \in \mathbb{N}}$ such that $\delta_n \to 0$ and a constant $A > 0$ such that $\Psi_f(\delta_n) > A$ for all $n \in \mathbb{N}$. Moreover, there exists a sequence $(t_n)_{n \in \mathbb{N}}$ with $t_n >1$ such that 
	\begin{equation}\label{hyp} 
		\dfrac{M_f(t_n\delta_n)}{t_n} > A.  
	\end{equation}
	
	If $(t_n)_{n \in \mathbb{N}}$ has a bounded  subsequence, then $1\leq t_{\phi(n)}\leq b$ and $\frac{M_f(t_{\phi(n)}\delta_n)}{t_{\phi(n)}}\leq M_f(b\delta_n)$. As $f$ is continuous, $M_f(b\delta_n)$ corresponds to a modulus of continuity, so $\lim_{\delta_n \to 0} M_f(b\delta_n)=0$. This leads to a contradiction with~\eqref{hyp}. If $\lim_{n \to \infty} t_n=\infty$, then $\frac{M_f(t_n\delta_n)}{t_n} \leq \frac{M_f(1)}{t_n}$, and so $ \lim_{\delta_n \to 0} \frac{M_f(t_n\delta_n)}{t_n}=0$. Hence, there is a contradiction with~\eqref{hyp}.
	\medskip
	
	We next analyze the case where $f$ is $\beta$-H\"older continuous.
	Then $M_f(\delta)\leq c_f \delta^{\beta}$.  Hence, 
	\begin{align*}
		\frac{M_f(t\delta)}{t} &\leq c_f \delta^{\beta} t^{\beta-1}\leq  c_f \delta^{\beta}, \qquad \quad \text{for $t\geq 1$ and $t\delta \leq 1$,}
		\\
		\dfrac{M_f(t\delta)}{t} &=\dfrac{M_f(1)}{t} \leq \dfrac{c_f}{t} \leq c_f \delta, \qquad \text{for $t\geq 1$ and $t\delta \geq 1$.}
	\end{align*}
	As $\delta \leq 1$ and $ 0< \beta \leq 1$, we have that $\Psi_f(\delta) \leq c_f \delta^{\beta}$.
\end{proof}
$$
$$
$$
$$



\begin{proof}[Proof of Proposition~\ref{intermed1}]
From \eqref{eq:JNF}, we have
\begin{align*}
	J_{N,F}(\pi_N (\widehat{u}_{F}))
	&= \| \pi_N(\widehat{u}_{F})\|_{N}^2+\displaystyle \frac{1}{\tau} \sum_{i=1}^{n}(P(\pi_N(\widehat{u}_{F}))(x_i)-y_i)^2 
	\\
	&=  \| \pi_N(\widehat{u}_{F})\|_{N}^2-\|  \widehat{u}_{F}\|_{\HF}^2+\|  \widehat{u}_{F}\|_{\HF}^2  +\displaystyle \frac{1}{\tau} \sum_{i=1}^{n}(P(\pi_N(\widehat{u}_{F}))(x_i)-y_i)^2. 
\end{align*}
Using \eqref{eq:JF}, then
\begin{align*}
	J_{N,F}(\pi_N (\widehat{u}_{F}))
	=& \| \pi_N(\widehat{u}_{F})\|_{N}^2-\|  \widehat{u}_{F}\|_{\HF}^2+J_F(\widehat{u}_{F}) 
	\\
	&+
	\frac{1}{\tau} \sum_{i=1}^{n} \left(P(\pi_N(\widehat{u}_{F}))(x_i)-y_i\right)^2 - \left(P (\widehat{u}_{F})(x_i)-y_i\right)^2 
	\\
	=& \| \pi_N(\widehat{u}_{F})\|_{N}^2-\|  \widehat{u}_{F}\|_{\HF}^2+J_F(\widehat{u}_{F}) + \epsilon_N,
\end{align*}
with 
$$\epsilon_N = \frac{1}{\tau} \sum_{i=1}^{n} \left[ P(\pi_N(\widehat{u}_{F}))(x_i) - P (\widehat{u}_{F})(x_i)\right] \left[ P(\pi_N(\widehat{u}_{F}))(x_i) + P (\widehat{u}_{F})(x_i)-2y_i    \right].$$

In the following, to simplify the notation, we will denote $t_i^-  =  \max \{ t, t \in S_N, t \leq x_i\}$, $t_i^+ =  \min \{ t, t \in S_N, t \geq x_i\}$, $w_- =  w_{{N}_-}$ and $w_+ =  w_{{N}_+}$.
\medskip

From \eqref{interrd1}, we have the bound
\begin{align*} 
	| P\pi_N(\widehat{u}_{F})(x_i)-P\widehat{u}_{F}(x_i)| 
	&= [\pi_N(\widehat{u}_{F})-\widehat{u}_{F}](t_i^-)w_-(x_i) +   [\pi_N(\widehat{u}_{F})-\widehat{u}_{F}](t_i^+) w_+(x_i) \\
	&\leq \| \pi_N(\widehat{u}_{F})-\widehat{u}_{F}\|_{\infty} \leq d_1 \delta_N^{\beta/2}.
\end{align*} 
Using Proposition~\ref{stability},~\eqref{hilb1} and \eqref{hilb2}, 
\begin{align*} 
	| P(\pi_N(\widehat{u}_{F}))(x_i) | 
	&= | \pi_N(\widehat{u}_{F})(t_i^-)w_-(x_i) +   \pi_N(\widehat{u}_{F}) (t_i^+) w_+(x_i)| 
	\leq \| \pi_N(\widehat{u}_{F})\|_{\infty}
	\leq  c \|  \widehat{u}_{F}\|_{\HF},
	\\
	| P (\widehat{u}_{F})(x_i) | 
	&= | \widehat{u}_{F}(t_i^-)w_-(x_i) +   \widehat{u}_{F} (t_i^+) w_+(x_i)| 
	\leq \| \widehat{u}_{F}\|_{\infty} 
	\leq  c \|  \widehat{u}_{F}\|_{\HF}.
\end{align*} 
Therefore, $J_{N,F}(\pi_N (\widehat{u}_{F}))= -\ENhat +J_F(\widehat{u}_F)+\epsilon_N$, with 
$$
| \epsilon_N | \leq d_3 \delta_N^{\beta/2}, \quad \mbox{and } \quad d_3=\frac{2n d_1}{\tau}   \Big(c\|\widehat{u}_{F}\|_{\HF} +\max_i | y_i|\Big).
$$ 
From the isometric property of $\rho_N$ in~\eqref{isorhoN}, we have
\begin{align*} 
	J_{N,F}(\widehat{u}_{N,F}) 
	&= \| \widehat{u}_{N,F}\|_N^2 +\displaystyle \frac{1}{\tau} \sum_{i=1}^{n}(P (\widehat{u}_{N,F})(x_i)-y_i)^2 
	\\
	&= \| \rho_N(\widehat{u}_{N,F})\|_{\HF}^2  +\displaystyle \frac{1}{\tau} \sum_{i=1}^{n}(P (\widehat{u}_{N,F})(x_i)-y_i)^2 
	= J_{F}( \rho_N(\widehat{u}_{N,F}))  + \eta_N,  
\end{align*}
with
\begin{align*}
	\eta_N   
	=& \displaystyle \frac{1}{\tau} \sum_{i=1}^{n} (P(\widehat{u}_{N,F})(x_i)-y_i)^2 - (P( \rho_N(\widehat{u}_{N,F}))(x_i)-y_i)^2 
	\\
	=& \displaystyle \frac{1}{\tau} \sum_{i=1}^{n} \left[ P(\widehat{u}_{N,F})(x_i)- P (\rho_N(\widehat{u}_{N,F}))(x_i)\right] \left[ P(\widehat{u}_{N,F})(x_i) +P( \rho_N(\widehat{u}_{N,F}))(x_i)-2y_i    \right].
\end{align*}
Thanks to \eqref{isorhoN}, \eqref{hilb1} and \eqref{hilb2}, we obtain
\begin{align*}
	| P(\widehat{u}_{N,F}(x_i))| 
	&\leq \| \widehat{u}_{N,F}\|_{\infty}  
	\leq  c\| \widehat{u}_{N,F}\|_{N}, 
	\\
	| P(\rho_N\widehat{u}_{N,F})(x_i)| 
	&\leq \|  \rho_N(\widehat{u}_{N,F})\|_{\infty}
	\leq c \|  \rho_N(\widehat{u}_{N,F})\|_{\HF} 
	\leq c \|   \widehat{u}_{N,F}\|_{N},
\end{align*}
and hence,
\begin{eqnarray*}
	P(\widehat{u}_{N,F}-\rho_N(\widehat{u}_{N,F}))(x_i) = (\widehat{u}_{N,F}-\rho_N(\widehat{u}_{N,F} ))(t^-_i)w_-(x_i)+(\widehat{u}_{N,F}-\rho_N(\widehat{u}_{N,F}))(t^+_i)w_+(x_i).
\end{eqnarray*}
For $t\in F$, 
\begin{align*}
	(\widehat{u}_{N,F}-\rho_N(\widehat{u}_{N,F}))(t) 
	&= \langle \widehat{u}_{N,F}, K_N(\cdot,t)\rangle_N-\langle\rho_N(\widehat{u}_{N,F}), K(\cdot,t)\rangle_{\HF}  
	\\
	&=  \langle\rho_N(\widehat{u}_{N,F}), \rho_N (K_N(\cdot,t))\rangle_{\HF}- \langle \rho_N(\widehat{u}_{N,F}), K(\cdot,t)\rangle_{\HF}  
	\\
	&= \langle\rho_N(\widehat{u}_{N,F}),  \rho_N (K_N(\cdot,t))-K(\cdot,t) \rangle_{\HF},  
	\\
	| \widehat{u}_{N,F}-\rho_N(\widehat{u}_{N,F})(t) |  
	&\leq  \| \rho_N(\widehat{u}_{N,F})\|_{\HF}  \| \rho_N (K_N(\cdot,t))-K(\cdot,t)\|_{\HF}
	\leq  \|  \widehat{u}_{N,F}\|_{N}  \sqrt{G_N}.
\end{align*}
From Proposition~\ref{convnoyaurho}, 
$J_{N,F}(\widehat{u}_{N,F})= J_{F}( \rho_N(\widehat{u}_{N,F}))  + \eta_N $ where 
\begin{eqnarray*} 
	| \eta_N | \leq d_4  \delta_N^{\beta/2}, \qquad d_4=\frac{2n\sqrt{d_2}}{\tau}  \|\widehat{u}_{N,F}\|_{N} \Big(c\|\widehat{u}_{N,F}\|_{N} +\max_i |y_i| \Big).
\end{eqnarray*}
And thanks to the bound of Proposition \eqref{d0}, we have the result. 
\end{proof}

\begin{proof}[Proof of \eqref{mainlim} in Theorem~\ref{thint}]

Let us set $\widehat{h}^N=\rho_N(\widehat{u}_{N,F})$. As $\pi_N(\widehat{u}_F )\in \HNF \cap C_F$, according to \eqref{inter1} and \eqref{inter2}, and as  $\ENhat \geq 0,$    
\begin{align}
	\| \widehat{h}^N \|_{\HF}^2 
	\leq J_F(\widehat{h}^N) 
	\leq J_{N,F}(\widehat{u}_{N,F}) + | \eta_N|
	\leq J_{N,F}(\pi_N\widehat{u}_{F}) + | \eta_N|
	\leq J_{F}(\widehat{u}_{F}) +| \epsilon_N | + | \eta_N|.
	\label{ine}
\end{align}
Hence, the sequence $(\widehat{h}^N)_{N}$ is bounded in $\HF$ so that, by weak compactness in a Hilbert space,  there exists a subsequence $(\widehat{h}^{N_k})_{k }$ and  $h^* \in \HF$ such that    
\begin{equation}\label{weakcompact}
	\widehat{h}^{N_k} \xrightharpoonup[k \to \infty]{} h^*\in \HF, \qquad \text{(weak convergence).}
\end{equation}

As $\HF$ is an RKHS with kernel $K$, for all $t_i \in  S_N, K(\cdot,t_i)\in \HF $ and
$$
\langle \widehat{h}^{N_k},K(\cdot,t_i)\rangle_{\HF}=h^{N_k}(t_i)
\xrightharpoonup[k \to \infty]{} \langle h^{*},K(\cdot,t_i) \rangle_{\HF}=h^{*}({t_i}).
$$
Therefore, for all $N \geq 1$,  $\pi_N(\widehat{h}^{N_k}) \xrightarrow[k\to \infty]{} \pi_N(h^*) $ in the finite-dimensional space $\HN$.

As $S_{N} \subset S_{N+1}$,  as far as $N_k\geq N$,  $ \pi_N(\widehat{h}^{N_k})= \pi_N(\rho_{N_k}(\widehat{u}_{N_k,F}))= \pi_N(\widehat{u}_{N_k,F})$,  
so that 
$$
\pi_{N}(\widehat{u}_{N_k,F}) \xrightarrow[k\to \infty]{} \pi_N(h^*) \quad \mbox{in} \ \HN.
$$
As $\HN$ is an Hibertian subspace of $E_F$, 
$$
\pi_{N}(\widehat{u}_{N_k,F}) \xrightarrow[k\to \infty]{} \pi_N(h^*) \quad \mbox{in} \  E_F.
$$
As $\pi_N(\widehat{u}_{N_k,F}  ) \in  C_F$  and $C_F$ is closed in $E_F$, so that $\forall N$,  $$\pi_N(h^*) \in  C_F.$$
$C_F$ is closed in $E_F$ and  $\pi_N(h^*) \underset{N\to \infty}\rightarrow h^*$ in $E_F$, then $h^* \in C_F$ so that
$$   J_F(\widehat{u}_F  ) \leq J_F(h^*). $$
Then, as $J_F$ is convex and lower semi continuous and $\widehat{h}^{N_k} \xrightharpoonup[k\to \infty]{} \widehat{h}^*\in \HF$ and thanks to \eqref{ine}, as $\displaystyle\lim_{N \to \infty}\delta_N = 0$, so that by Proposition~\ref{intermed1}, $\displaystyle\lim_{N \to \infty}\epsilon_N = 0$ and $\displaystyle\lim_{N \to \infty}\eta_N = 0$, 
\begin{align*}
	J_F(\widehat{u}_F  ) 
	\leq J_F(h^*)
	&\leq \liminf_k J_F(\widehat{h}^{N_k})
	\\
	&\leq \liminf_k J_{N_k,F}(\widehat{u}_{N_k,F} )
	\\
	&\leq \liminf_k J_{N_k,F}(\pi_{N_k}(\widehat{u}_{F} )) 
	\\
	&\leq \limsup_k J_{N_k, F}(\pi_{N_k}(\widehat{u}_{F} ))  
	\leq J_F(\widehat{u}_F  ).  
\end{align*}
Hence  
\begin{equation*}
	h^*=\widehat{u}_{F}
\end{equation*}
and 
\begin{equation*}
	\lim_{N \to \infty} J_{F}(\widehat{h}^{N_k} )
	= \lim_{N \to \infty} J_{N_k,F}(\widehat{u}_{N_k,F})
	= \lim_{N \to \infty} J_{N_k,F}(\pi_{N_k}(\widehat{u}_{F}))
	= J_F(\widehat{u}_F).
\end{equation*}
The real sequences $(J_{F}(\widehat{h}^{N} )), \ (J_{N,F}(\widehat{u}_{N,F})) \ (J_{N,F}(\pi_{N}\widehat{u}_{F}))$ are bounded and have a unique accumulation point $J_F(\widehat{u}_F)$ so that  
\begin{align}\label{intbound}
	J_{F}(\widehat{h}^{N} ) \xrightarrow[N \to \infty]{}  J_F(\widehat{u}_F),
	\quad
	J_{N,F}(\widehat{u}_{N,F}) \xrightarrow[N \to \infty]{} J_F(\widehat{u}_F),
	\quad
	J_{N,F}(\pi_{N}\widehat{u}_{F}) \xrightarrow[N \to \infty]{} J_F(\widehat{u}_F).
\end{align}
The following bounds 
\begin{align*}
	\| \widehat{u}_{N,F}-\widehat{u}_{F}\|_{\infty}
	& \leq    c \ \|\pi_N (\widehat{u}_{F}) - \widehat{u}_{N,F}\|_{N} + \FNhat,
	\\
	\|\pi_N (\widehat{u}_{F}) - \widehat{u}_{N,F}\|_{N}^2 & \leq   J_F(\widehat{u}_F)-
	J_{F}( \widehat{h}^N)  +| \epsilon_N | +| \eta_N|,
\end{align*} 
where $ \FNhat$ is defined in \eqref{F_Nhat}. It leads to \eqref{mainlim}. 
\end{proof}  

\bibliography{Biblio}

\end{document}